\newtheorem{theorem}{Theorem}
\newtheorem{lemma}[theorem]{Lemma}
\newtheoremstyle{named}{}{}{\itshape}{}{\bfseries}{.}{.5em}{\thmnote{#3 }#1}
\theoremstyle{named}
\newtheorem*{namedtheorem}{Theorem}
\newtheorem*{namedproposition}{Proposition}
\begin{document}

\title{Optimal Control of the SIR\ Model with Constrained Policy, with
an Application to COVID-19}
\author{Yujia Ding, Henry Schellhorn}
\maketitle

\begin{abstract}
This article considers the optimal control of the SIR\ model with both
transmission and treatment uncertainty. It follows the model presented in
\cite{gatto2021optimal}. We make four significant improvements on the
latter paper. First, we prove the existence of a solution to the model. Second,
our interpretation of the control is more realistic: while in Gatto and
Schellhorn the control $\alpha $ is the proportion of the population that
takes a basic dose of treatment, so that $\alpha >1$ occurs only if some
patients take more than a basic dose, in our paper, $\alpha $ is constrained
between zero and one, and represents thus the \textit{proportion of the
population} undergoing treatment. Third, we provide a complete solution for
the moderate infection regime (with constant treatment). Finally, we give a
thorough interpretation of the control in the moderate infection regime,
while Gatto and Schellhorn focussed on the interpretation of the low
infection regime. Finally, we compare the efficiency of our control to curb
the COVID-19 epidemic to other types of control.
\end{abstract}

\section{Introduction}

This article extends the analysis of the model presented in \cite{gatto2021optimal}.We make four significant improvements on the latter paper.
First, we prove existence of a solution. Second, In \cite{gatto2021optimal} the
optimal control $\alpha $ has the interpretation of the proportion of the
population that takes a basic dose of treatment, so that $\alpha >1$ occurs
only if a proportion of the population takes more than a basic dose of
treatment. In the low infection regime part of our paper, $\alpha $ is
constrained to be between zero and one, and represents thus the \textit{%
proportion of the population} undergoing treatment. The latter
interpretation is much more realistic, as it is uncommon to ration
treatment. Third, we provide a complete solution for the moderate infection
regime (with constant treatment). The final improvement is a thorough
numerical analysis and sensitivity analysis of the moderate infection
regime, while \cite{gatto2021optimal} focused exclusively on the
interpretation of the control in the low infection regime. This enables us
to discover some errors in the second-order term of the solution in \cite{gatto2021optimal}, which we correct here. Finally, we compare the efficiency of
our control to curb the COVID-19 pandemic to other types of control. While
our solution is complex, it allows to satisfy the objective better. Also,
our analytical solutions allow for an intuitive understanding of the
optimal control compared to a purely numerical solution.

The structure of the article is as follows. In section \ref{sec::intro} we briefly
introduce the model in \cite{gatto2021optimal}, and provide a proof of existence
of the solution. In section \ref{sec::low}, we show our results for the low infection
regime. In section \ref{sec::moderate}, we extend and analyze the solution in the moderate
infection regime. Section \ref{sec::application} shows our experimental results when applying our
methodology to the COVID-19 in the US in 2020. We draw the conclusion in Section \ref{sec::conclusion}. We refer the reader to our earlier paper, \cite{gatto2021optimal} for a literature review.

\section{A Stochastic SIR\ Model with Treatment Uncertainty}
\label{sec::intro}
Let $S$, $I$, $R$ be the proportions of susceptible, infected,
and out of infection (recovered, and dead), respectively. Let $\beta $ be the transmission
rate and $\mu $ be the death rate.

In the SIR model, the rate of decrease $\frac{dS}{dt}$ of the proportion of
susceptible is equal to the constant transmission rate $\beta $ time $SI$.
As in \cite{gatto2021optimal}, we add a term $\sigma _{S}\sqrt{SI}\frac{dB_{1}}{%
dt}$, where $\frac{dB_{1}}{dt}$ is white noise, in order to model the error
in the transmission rate:
\begin{equation*}
\frac{dS}{dt}=-\beta SI+\sigma _{S}\sqrt{SI}\frac{dB_{1}}{dt}  \label{S}
\end{equation*}

The optimal policy $\alpha $ is the proportion of the infected population
that received treatement, thus $\alpha (t)\in \lbrack 0,1]$. The presence of
this constraint is an important addition to the model in \cite{gatto2021optimal}. Depending whether the individual is treated or not, there are
then four different ways for an infected individual to exit the pool of
infected:
\begin{itemize}
\setlength\itemsep{-0.1em}
\item not treated and recover
\item not treated and die
\item treated and recover
\item treated and died.
\end{itemize}
Thus, the "out of infection rate" will be:
\begin{gather}
\label{dR/dt}
\begin{aligned}
\frac{dR(t)}{dt} =&\underset{\text{not treated and recover}}{\underbrace{%
(1-\alpha (t))I(t)K_{0}}}+\underset{\text{not treated and die}}{\underbrace{%
(1-\alpha (t))I(t)\mu _{0}}}+\underset{\text{treated and recover}}{%
\underbrace{\alpha (t)I(t)K_{1}(t)}}   \\
&+\underset{\text{treated and die}}{\underbrace{\alpha (t)I(t)\mu _{1}}}-%
\underset{\text{treatment measurement error}}{\underbrace{\alpha
(t)I(t)\sigma \frac{dB_{2}}{dt}}}
\end{aligned}
\end{gather}

For simplicity, we assume that the Brownian motion driving transmission
uncertainty ($B_{1}$) is independent from the Brownian motion driving
treatment uncertainty ($B_{2}$). We suppose that $\mu _{0}\geq \mu _{1}$
(people die faster without treatment than with treatment), but the reader
will not lose any intuition by supposing that $\mu _{0}=\mu _{1}$. Most of
the time $K_{1}(t)>K_{0}$ (treatment is better than no treatment), but not
necessarily. We relax this requirement somewhat by requiring:
\begin{equation}
P(K_{0}<K_{1}(t))\text{ is close to one}  \label{positivity}
\end{equation}

We model the treatment rate as an Ornstein-Uhlenbeck process:%
\begin{equation*}
dK_{1}(t)=\lambda _{k}(\bar{k}_{1}-K_{1}(t))dt+\sigma _{k}dB_{2}(t)
\end{equation*}%
with the mean-reversion rate $\lambda _{k}>0$ and the long run value of the
treatment rate $\bar{k}_{1}$. It is well-known that $K_{1}$ is Gaussian,
with variance equal to:
\begin{equation*}
\mathbb Var[K_{1}(t)]=\frac{\sigma _{k}^{2}}{2\lambda _{k}}(1-e^{-2\lambda _{k}t})
\end{equation*}
Thus, if mean-reversion is large compared to volatility $\sigma _{k}$,
constraint (\ref{positivity}) is satisfied. We simplify (\ref{dR/dt}) by:
\begin{equation*}
\frac{\frac{dR(t)}{dt}}{I(t)}=K_{0}+\mu _{0}+\alpha (t)(-K_{0}+K_{1}(t)-\mu
_{0}+\mu _{1})-\alpha (t)\sigma \frac{dB_{2}}{dt}  \label{Recovery_rate}
\end{equation*}
Putting everything together, the dynamics of the infected is:
\begin{equation*}
\frac{dI(t)}{dt}=\beta S(t)I(t)-\frac{dR(t)}{dt}-\sigma _{S}\sqrt{S(t)I(t)}%
\frac{dB_{1}}{dt}
\end{equation*}

We try to minimize a measure of the infected over our horizon $T$. To model
risk-aversion to unfavorable treatment decisions, the decision-maker is
supposed to minimize the expected value of a convex and increasing function
of $I(T)$. Alternately, one can maximize the negative thereof, i.e.,
maximize the expected value of a concave and decreasing function of $I(T).$
Such a function $U$ is called a \textit{utility function} in financial
economics. The policy obtained in maximizing the expected value of a concave
utility function can be showed, under certain conditions, to maximize the
expected value of the outcome (here $-I$)\ under a constraint on the
dispersion of the outcome. Out of the universe of concave decreasing utility
functions, we choose the power utility function:
\begin{equation*}
U(I)=-\frac{I^{1-\gamma }}{1-\gamma }
\end{equation*}

The coefficient $\gamma $ is often called the \textit{risk-aversion parameter%
}. When $\gamma =0$ the decision-maker is \textit{risk-neutral}, meaning
that the uncertainty does not have an influence on her decisions. It is
straightforward to check that this power utility function is concave in $I$
when $\gamma <0,$which we will assume. Taking for instance $\gamma =-1$, we
see that the objective is to%
\begin{equation*}
\max\mathbb E\big[-\frac{I^{2}}{2}\big]
\end{equation*}
which returns the same policy as:%
\begin{equation*}
\min\mathbb E\big[\frac{I^{2}}{2}\big]
\end{equation*}

The importance of analytic formulations is that other figures of interest in
this model, like the expected number of deaths from treatment can be
analytically calculated, and depend on $\gamma $. Thus, a decision-maker can
calibrate its risk-aversion parameter $\gamma $ on other goals. Expected
number of deaths is only one type of goal and economic factors that can be
easily added. We define 
\begin{equation*}
\tau =\min \{t>0|I(t)\leq 0\text{ or }I(t)\geq 1\}
\end{equation*}
Our controlled SIR\ model is thus:
\begin{eqnarray}
&&\hspace{2cm}\sup_{0\leq \alpha (t)\leq 1}\mathbb E\big[-\frac{I(\min (\tau ,T))^{1-\gamma }}{%
1-\gamma }\big]\notag\\
&&dS(t) =-\beta S(t)I(t)dt+\sigma _{S}\sqrt{S(t)I(t)}dB_{1}(t)
\label{susceptible} \\
&&dI(t) =\left( \beta S(t)-(K_{0}+\mu _{0})+\alpha (t)(K_{0}-K_{1}(t)+\mu
_{0}-\mu _{1})\right) I(t)dt  \notag \\
&&~~~~~~~~~+\alpha (t)I(t)\sigma dB_{2}(t)-\sigma _{S}\sqrt{S(t)I(t)}dB_{1}(t)  \label{infected}
\\
&&dK(t) =\lambda _{k}(\bar{K}-K(t))dt+\sigma _{k}dB_{2}(t)  \label{Kequ}
\end{eqnarray}

The relative sign of our volatilities $\sigma $ and $%
\sigma _{k}$ is important. We will assume without loss of generality that $%
\sigma <0$. The sign of $\sigma _{k}$ is the sign of covariance between the
measured value of today's treatment rate and the change in value of the
treatment rate between today and a future date. An example may help
illustrate the difference. Suppose that over a week one performs daily
measurements of the treatment recovery rate as well as daily forecasts of
the evolution of the treatment recovery rate over the next day. The two
quantities measured each day $t$ are proportional to the same white noise $%
B_{2}(t+$1 day$)-B_{2}(t)$. One then calculates weekly estimates $\hat{\sigma%
}$ of $\sigma $ and $\hat{\sigma}_{k}$ of $\sigma _{k}$ over these 7 daily
observations. Since we arbitrarily choose $\sigma >0$, a positive $\hat{%
\sigma}_{k}$ shows a correlation of +1 between the measurement (of today's
treatment rate) and the forecast. Figure \ref{fig:workflow} is a depiction of our model.

\begin{figure}[!htb]
    \centering
    \includegraphics[width = \textwidth]{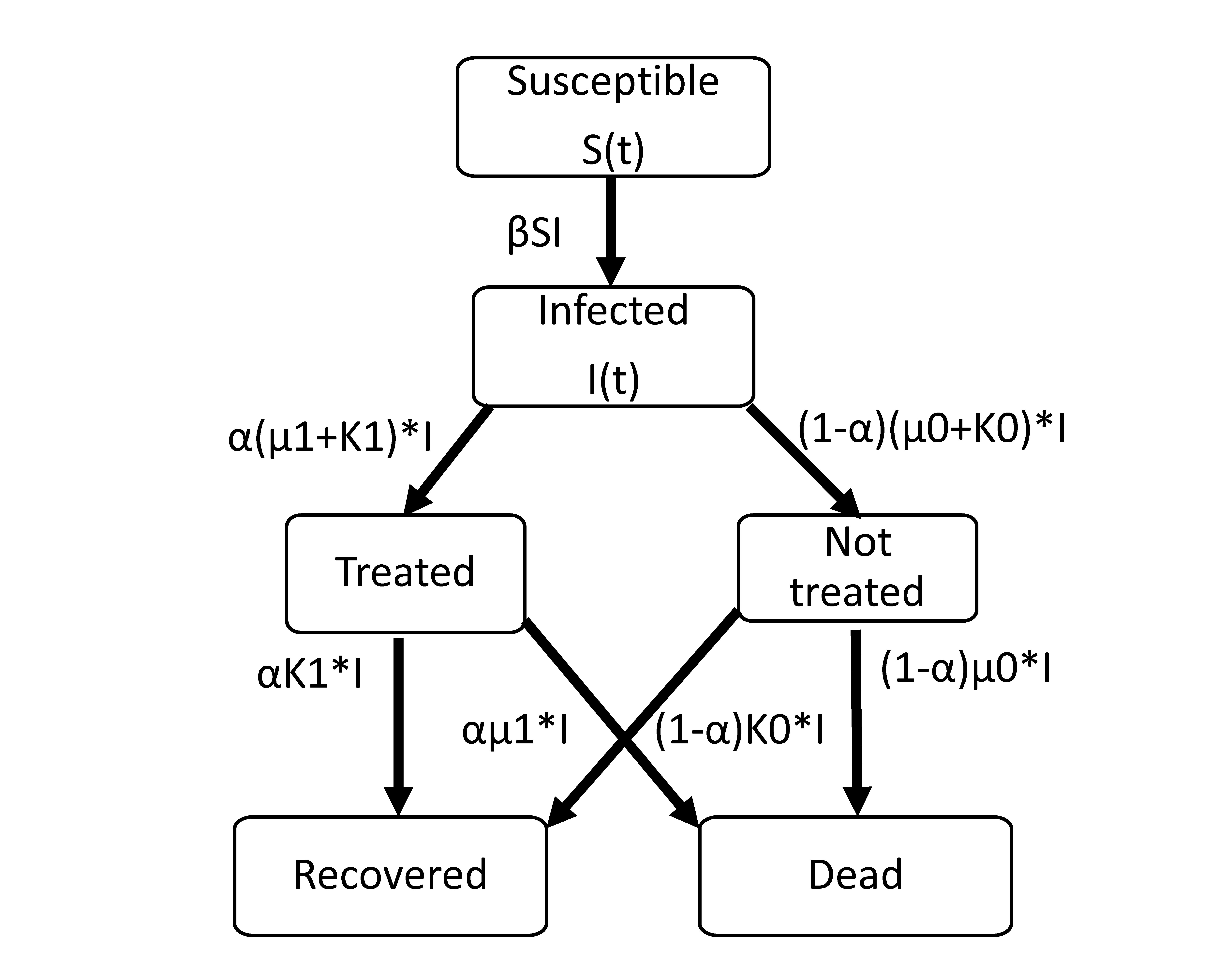}
    \caption{A stochastic SIR model}
    \label{fig:workflow}
\end{figure}

\begin{theorem}
\label{thm::uniqueness}
For any given intial value $(S(0),I(0),R(0))$ and any $%
X(0)$ there exists a unique solution of (\ref{susceptible})(\ref{infected})(%
\ref{Kequ}) up to time $\tau $.
\end{theorem}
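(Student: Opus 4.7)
My plan is to handle the three equations sequentially. The OU equation (\ref{Kequ}) for $K$ has globally Lipschitz coefficients $\lambda_k(\bar K - k)$ and constant diffusion $\sigma_k$, so by classical It\^o theory it admits a unique strong solution on $[0,T]$. Treating $K$ and any admissible control $\alpha\in[0,1]$ as given progressively measurable processes, the remaining task is to establish strong existence and pathwise uniqueness for the coupled $(S,I)$ system (\ref{susceptible})-(\ref{infected}) up to the stopping time $\tau$. The only feature preventing a direct appeal to classical Lipschitz SDE theory is the square-root diffusion $\sigma_S\sqrt{SI}$, which fails to be Lipschitz on $\{S=0\}\cup\{I=0\}$; since $\tau$ is defined precisely to stop the process before $I$ reaches the boundary of $(0,1)$, this is a localization problem rather than an essential obstacle.

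\textbf{Localization.} For each integer $n\ge 2$, I would introduce
\begin{equation*}
\tau_n := \inf\bigl\{t\ge 0 : S(t)\le 1/n \text{ or } I(t)\notin[1/n,\,1-1/n]\bigr\}\wedge T,
\end{equation*}
and truncate the coefficients of (\ref{susceptible})-(\ref{infected}) in a smooth way outside the compact set $R_n=\{(s,i):1/n\le s\le 1,\ 1/n\le i\le 1-1/n\}$ so that they become globally Lipschitz in $(s,i)$ with linear growth. On $R_n$ itself, $(s,i)\mapsto\sqrt{si}$ is $C^\infty$ and the drifts are polynomial in $(s,i,K)$ and linear in $\alpha$, so the truncated system satisfies the hypotheses of the standard strong existence and pathwise uniqueness theorem for It\^o SDEs (e.g.\ Theorem~5.2.9 of Karatzas and Shreve). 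This produces a unique strong solution $(S^{(n)},I^{(n)})$ on $[0,T]$. Pathwise uniqueness applied to the untruncated equations on $[0,\tau_n\wedge\tau_m]$ makes the family consistent, so one may define $(S(t),I(t)):=(S^{(n)}(t),I^{(n)}(t))$ for $t\le\tau_n$, yielding a solution on $[0,\tau_\infty)$ with $\tau_\infty:=\lim_n\tau_n$.

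\textbf{Identifying the limit and the main obstacle.} It remains to show $\tau_\infty=\tau\wedge T$ almost surely. The contribution of the event $\{I\notin[1/n,1-1/n]\}$ to $\tau_n$ increases to $\tau\wedge T$ by definition of $\tau$, so the delicate piece is the $S$-lower boundary event $\{S\le 1/n\}$: one must either rule out $S$ reaching $0$ before $\tau$, or extend the solution past such a time (where the diffusion in (\ref{susceptible}) vanishes, $S$ is absorbed at $0$, and (\ref{infected}) reduces to a linear SDE with unique strong solution). I would take the first route: on $[0,\tau)$ the process $I$ is bounded in $(0,1)$ and bounded away from $0$ on each $[0,\tau_n]$, so the $S$-equation behaves like a CIR-type diffusion $dS=-\beta I S\,dt+\sigma_S\sqrt{I}\,\sqrt{S}\,dB_1$ with bounded adapted coefficients, and a Feller boundary classification (equivalently, a Yamada-Watanabe comparison with an explicit CIR process) shows $S>0$ for all $t<\tau$ provided $S(0)>0$. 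Pathwise uniqueness transfers directly from each $[0,\tau_n]$ to $[0,\tau_\infty)$, completing the proof. The technical heart of the argument, and the main obstacle, is therefore not existence per se but this boundary analysis at $S=0$; everything else reduces to standard localized It\^o theory.
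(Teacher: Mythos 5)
Your overall architecture (solve the OU equation for $K$ separately, then treat $(S,I)$ with $\alpha$ and $K$ as given) is fine, but the step you yourself identify as the technical heart is where the argument breaks. The claim that a Feller boundary classification or a CIR comparison shows $S>0$ for all $t<\tau$ is false for equation (\ref{susceptible}): near $S=0$ the drift is $-\beta S I$, which vanishes at the boundary, so the comparison process is $dS=-\kappa S\,dt+\sigma\sqrt{S}\,dB$ with mean-reversion level zero. The Feller condition ($2\kappa\theta\ge\sigma^2$ with $\theta=0$) fails, and indeed this process is, up to a deterministic time change, a squared Bessel process of dimension zero, which hits $0$ in finite time with positive probability. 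Since $\tau$ is defined only through $I$, the event $\{S \text{ hits } 0\}$ can occur strictly before $\tau\wedge T$, so your $\tau_\infty$ is strictly smaller than $\tau\wedge T$ with positive probability and the patched localized solutions do not cover the interval claimed in the theorem. The fallback you mention in passing (absorption at $S=0$ and continuation of (\ref{infected}) as a linear SDE) is the route that would actually have to be carried out, and making it rigorous requires a pathwise uniqueness statement that tolerates the degenerate square-root coefficient at the boundary --- precisely the point your Lipschitz localization was designed to avoid.

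The paper does not localize at all and needs no boundary classification: it proves pathwise uniqueness directly for the coupled $(S,I)$ system by the Yamada--Watanabe technique (following Proposition 2.13, Section 5.2 of Karatzas and Shreve), i.e.\ it applies It\^o's formula to $\Psi_n(I^{(1)}-I^{(2)})+\Psi_n(S^{(1)}-S^{(2)})$ for the smooth approximations $\Psi_n$ of $|x|$, controls the second-order terms via $\bigl(\sqrt{S^{(1)}I^{(1)}}-\sqrt{S^{(2)}I^{(2)}}\bigr)^2\le |S^{(1)}I^{(1)}-S^{(2)}I^{(2)}|\le |S^{(1)}-S^{(2)}|+|I^{(1)}-I^{(2)}|$ together with the bound $\Psi_n''\le 2/(n h)$ and the fact that $S,I\in[0,1]$, and concludes with Gronwall's inequality. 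This handles the H\"older-$1/2$ diffusion coefficient globally, whether or not $S$ reaches $0$, so the uniqueness holds up to $\tau$ without any positivity claim on $S$. If you want to salvage your localization scheme, you would need to either incorporate the $S$-boundary into the stopping time structure and prove uniqueness of the absorbed continuation (essentially re-deriving a Yamada--Watanabe-type estimate at $S=0$), or simply adopt the paper's direct argument.
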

The proof of Theorem \ref{thm::uniqueness}, included  in Appendix \ref{app::proof_uniquenss_thm},  follows the proof of a theorem of Yamada and Watanabe (1971), as exposed
in the book by \citet[Prop.~2.13, Sec.~5.2]{karatzas2014brownian}.

\section{Results in the Low Infection Regime}
\label{sec::low}
We assume $S(t)$ close to one and $\sigma _{S}=0$. Thus the term:%
\begin{equation*}
r=\beta S(t)-(K_{0}+\mu _{0})\simeq \beta -K_{0}-\mu _{0}
\end{equation*}
is assumed constant. With this simplification, we give an analytical
solution to the constrained problem, i.e., the case where $0\leq \alpha
(t)\leq 1$, a significant improvement over Gatto and Schellhorn, who
considered the unconstrained case.

We define the impact of treatment risk $X$:
\begin{equation*}
X(t)=\frac{K_{0}+\mu _{0}-\mu _{1}-K_{1}(t)}{\sigma }
\end{equation*}
as well as the long run impact of the treatment risk $\bar{X}$:
\begin{equation*}
\bar{X}=\frac{K_{0}+\mu _{0}-\mu _{1}-\bar{k}_{1}}{\sigma }
\end{equation*}

We define $\lambda _{x}=\lambda _{k}$ and $\sigma _{x}=\sigma _{k}/\sigma $.
For simplicity we write $\mu =K_{0}+\mu _{0}$. We consider first the case
where the treatment rate is constant, and then the case where it follows an
Ornstein Uhlenbeck process.
\subsection{Constant Treatment Rate}

Let $b=\beta-\mu_1-\bar{k}_{1}$. The problem is:

\begin{equation}
\label{ConstantTreatment}
\begin{gathered}
\sup_{0\leq \alpha (t)\leq 1}\mathbb E\big[-\frac{I(T)^{1-\gamma }}{1-\gamma }\big] \\
dI(t)=(r+\alpha (t)(b-r))I(t)dt+\alpha (t)\sigma I(t)dB_{2}(t)
\end{gathered}
\end{equation}

\begin{theorem}
\label{thm::optimal_control_low_regime}
The optimal control is constant, and satisfies

\begin{equation*}
\alpha =\min \Big(1,\max \big(0,\frac{\bar{k}_1-K_0}{\sigma ^{2}|\gamma |}\big)\Big)
\end{equation*}
\end{theorem}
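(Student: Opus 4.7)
My plan is to solve the stochastic control problem in (\ref{ConstantTreatment}) by dynamic programming. Because the dynamics of $I$ are scale-invariant (both drift and diffusion scale with $I$) and the terminal payoff $-I(T)^{1-\gamma}/(1-\gamma)$ is homogeneous of degree $1-\gamma$, I expect the value function to inherit the same homogeneity. I would therefore guess the separable ansatz
\begin{equation*}
V(t,I) = -\frac{f(t)\, I^{1-\gamma}}{1-\gamma},\qquad f(T) = 1,
\end{equation*}
and substitute it into the HJB equation
\begin{equation*}
\partial_{t}V + \sup_{\alpha \in [0,1]} \Big\{(r + \alpha(b-r))\, I\, \partial_{I}V + \tfrac{1}{2} \alpha^{2} \sigma^{2} I^{2}\, \partial_{II}V\Big\} = 0.
\end{equation*}
Since $\gamma < 0$ and (provisionally) $f > 0$, the common positive factor $f(t)\, I^{1-\gamma}$ decouples from the supremum after dividing through.

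After this division, the Hamiltonian becomes a function of $\alpha$ alone,
\begin{equation*}
H(\alpha) = -(r + \alpha(b-r)) + \tfrac{1}{2} \gamma \sigma^{2} \alpha^{2},
\end{equation*}
which is strictly concave because $\gamma < 0$. The fact that $H$ no longer depends on $t$ or $I$ immediately delivers the first half of the theorem: the maximiser is a deterministic constant. Setting $H'(\alpha)=0$ yields $\alpha^{\circ} = (b-r)/(\gamma \sigma^{2}) = (r-b)/(|\gamma|\sigma^{2})$; substituting $r-b = K_{0} + \mu_{0} - \mu_{1} - \bar{k}_{1}$, which collapses to $\bar{k}_{1} - K_{0}$ under the paper's convention $\mu_{0}=\mu_{1}$, recovers the claimed numerator. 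Concavity of $H$ on $\mathbb{R}$ then forces the maximiser over $[0,1]$ to be the metric projection of $\alpha^{\circ}$ onto $[0,1]$, namely $\alpha^{*} = \min(1,\max(0,\alpha^{\circ}))$.

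Feeding $\alpha^{*}$ back in, what is left for $f$ is the linear ODE $f'(t) = (1-\gamma)H(\alpha^{*})f(t)$ with terminal condition $f(T)=1$, whose solution is the strictly positive exponential $f(t) = \exp\!\big(-(1-\gamma)H(\alpha^{*})(T-t)\big)$. This retroactively validates the assumption $f>0$ used when dividing through, closing the construction of the candidate value function. What remains is a standard verification argument: applying Ito's formula to $V(t,I(t))$ and using $H(\alpha(t)) \le H(\alpha^{*})$ pointwise shows that $V(t,I(t))$ is a supermartingale for every admissible $\alpha(\cdot)$ and a true martingale under $\alpha^{*}$. The main obstacle I anticipate is the integrability bookkeeping needed to upgrade the stochastic integral from a local to a genuine martingale on $[0,T]$: one needs finite moments of $I(t)$ of order $1-\gamma$, and of $\alpha(t)\sigma I(t)\,\partial_{I}V$ in $L^{2}$. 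Fortunately, $\alpha(t)\in[0,1]$ keeps the SDE coefficients uniformly bounded, so a standard Gronwall or exponential-moment estimate on this geometric-type SDE closes the gap without trouble.
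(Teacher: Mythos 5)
Your argument is correct, but it is a genuinely different route from the one in the paper. You attack the primal problem head-on with dynamic programming: the homogeneous ansatz $V(t,I)=-f(t)I^{1-\gamma}/(1-\gamma)$ reduces the HJB to a pointwise maximization of the strictly concave quadratic $H(\alpha)=-(r+\alpha(b-r))+\tfrac12\gamma\sigma^2\alpha^2$, whose maximizer over $[0,1]$ is the projection of the unconstrained critical point $(r-b)/(\sigma^2|\gamma|)$ onto the interval; with $\mu_0=\mu_1$ this is exactly the stated formula, and your verification step (supermartingale for arbitrary admissible $\alpha$, martingale at $\alpha^*$, with integrability secured by the boundedness of $\alpha\sigma$) is the standard and adequate closing move. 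The paper instead follows the Cvitani\'c--Karatzas convex-duality machinery: it introduces a family of auxiliary \emph{unconstrained} problems indexed by dual processes $\nu$ with support function $\delta(\nu)=\nu^+$, solves a dual HJB in the state-price-density variable to find the optimal $\nu$ (which is $0$ when the unconstrained optimum lies in $[0,1]$ and otherwise shifts it to the boundary), and then recovers the primal control via $\alpha=(r-b-\nu)/(\sigma^2|\gamma|)$ together with the complementary-slackness condition $-\delta(\lambda)+\alpha_\lambda\lambda=0$. For this specific problem your approach is shorter and more transparent, since the box constraint enters only through a trivial one-dimensional projection inside the Hamiltonian; what the paper's duality route buys is a framework that extends to the cases where a clean primal value-function ansatz is unavailable (e.g., the Ornstein--Uhlenbeck treatment rate and the moderate-infection regime treated later), and it produces the Lagrange-multiplier process $\lambda$ explicitly. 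One small point worth making explicit in your write-up: the division by the common factor $f(t)I^{1-\gamma}$ preserves the direction of the supremum only because that factor is positive, which your positivity of $f$ (verified a posteriori from the linear ODE) does establish; and the constancy claim in the theorem follows precisely because $H$ is independent of $(t,I)$, as you note.
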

The proof is the Appendix \ref{proof::optimal_control_low_regime}, and follows closely \cite{cvitanic1992convex}.

\subsection{Treatment Rate as Ornstein-Uhlenbeck Process}

The problem is
\begin{gather}
\label{OU_low_infect_prob}
\begin{aligned}
    &\hspace{2cm}\sup \mathbb E\big[-\frac{I(T)^{1-\gamma }}{1-\gamma }\big] \\
&dI(t)=(r+\alpha (t)\sigma X(t))I(t)dt+\alpha (t)\sigma I(t)dB_{2}(t) \\
&dX(t)=\lambda _{x}(\bar{X}-X(t))dt-\sigma _{x}dB_{2}(t)
\end{aligned}
\end{gather}

In the low infection regime our solution will depend on a kernel $H_{0}(X_{t},\tau)$ with $\tau = T-t$, while
in the moderate infection regime it will also depend on two other kernels $%
H_{1}(X_{t},\tau) $ and $H_{2}(X_{t},\tau)$ that are closely related. In order to
unify notation we define the kernels. Define
\begin{equation}
\label{kernal_H0}
H_{0}(X_{t},\tau)=\exp \left( \frac{1}{\gamma }\Big(\frac{A_{1}(\tau ,\gamma
)X_{t}^{2}}{2}+A_{2}(\tau ,\gamma )X_{t}+A_{3}(\tau ,\gamma )+(1-\gamma
)(\mu +r)\tau\Big) \right)
\end{equation}%
and, for $i>0$
\begin{equation}
H_{i}(X_{t},\tau)=\exp \left( \frac{i}{\gamma }\Big(\frac{A_{1}(\tau ,\gamma
/i)X_{t}^{2}}{2}+A_{2}(\tau ,\gamma /i)X_{t}+A_{3}(\tau ,\gamma /i)\Big)\right)
\label{KernelH1}
\end{equation}
where
\begin{eqnarray}
&&A_{1}(\tau ,\gamma ) =\frac{1-\gamma }{\gamma }\frac{2(1-\exp (-\theta
(\gamma )\tau ))}{2\theta (\gamma )-(b_{2}(\gamma )+\theta (\gamma ))(1-\exp
(-\theta (\gamma )\tau ))}  \label{A1} \\
&&A_{2}(\tau ,\gamma ) =\frac{4\lambda _{x}\bar{X}b_{1}(\gamma )\left(
1-\exp \left( -\theta (\gamma )\tau /2\right) \right) ^{2}}{\theta (\gamma
)\left( 2\theta (\gamma )-(\theta (\gamma )+b_{2}(\gamma ))(1-\exp (-\theta
(\gamma )\tau ))\right) } \label{A2} \\
&&A_{3}(\tau ,\gamma ) =\int_{0}^{\tau }\left( \frac{\sigma _{x}^{2}}{2\gamma 
}+\lambda _{x}\bar{X}\right) A_{2}^{2}(s,\gamma )+\frac{\sigma _{x}^{2}}{2}%
A_{1}(s,\gamma )+(\gamma -1)\mu ds  \label{A3}\\
&&b_{1}(\gamma ) =\frac{1-\gamma }{\gamma } ~~~~~b_{2}(\gamma ) =2\big(\frac{\gamma -1}{\gamma }\sigma _{x}-\lambda _{x}\big) ~~~~~b_{3}(\gamma ) =\frac{\sigma _{x}^{2}}{\gamma } \notag \\
&&\theta (\gamma ) =\sqrt{b_{2}^{2}(\gamma )-4b_{1}(\gamma )b_{3}(\gamma )}\notag
\end{eqnarray}

We provide an explicit formula for $A_{3}(\tau ,\gamma ) $ in Appendix \ref{app::explicit_A3}.
The solution to (\ref{OU_low_infect_prob}) is shown in \citet[Prop. 1]{gatto2021optimal}, but with some typos in the expression of $H_0(X_{t},\tau)$, hence we correct the mistake by providing (\ref{kernal_H0}) in this paper.

\section{Results in the Moderate Infection Regime}
\label{sec::moderate}
We first handle the Ornstein-Uhlenbeck treatment rate case, which was presented in \citet[Prop. 2]{gatto2021optimal}. In this work, we aim to correct the typos and provide more detials for the  Proposition 2 in \cite{gatto2021optimal}.

\subsection{Treatment Rate as Ornstein-Uhlenbeck Process}

The problem is defined in Section 2. We rewrite here for convenience,
\begin{gather}
\label{ou_moderate_prob}
    \begin{aligned}
    &\hspace{2cm}\sup\mathbb E\big[-\frac{I(\min (\tau ,T))^{1-\gamma }}{%
    1-\gamma }\big] \\
    &dS(t) =-\beta S(t)I(t)dt+\sigma _{S}\sqrt{S(t)I(t)}dB_{1}(t) \\
    &dI(t) =\left( \beta S(t)-\mu +\alpha (t)\sigma X(t)\right) I(t)dt +\alpha (t)I(t)\sigma dB_{2}(t)\\
    &\hspace{1cm}-\sigma _{S}\sqrt{S(t)I(t)}dB_{1}(t)
    \\
    &dX(t) =\lambda _{x}(\bar{X}-X(t))dt-\sigma _{x}dB_{2}(t)
    \end{aligned}
\end{gather}
To express the solution of (\ref{ou_moderate_prob}), we further define 
\begin{eqnarray}
&&\tilde{M}(t,\tau ) =\frac{2\theta (\gamma )e^{\frac{1}{2}(b_{2}(\gamma
/2)-b_{2}(\gamma)-\theta(\gamma ))(\tau -t)}}{2\theta (\gamma )-(b_2(\gamma )+\theta(\gamma ))\left(
1-e^{-\theta (\gamma )(\tau -t)}\right) }\label{M_tilde} \\
&&m_{Y}(\tau ,x) =x\tilde{M}(t,\tau )+\int_{s=t}^{\tau }\tilde{M}(s,\tau
)(\lambda _{x}\bar{X}+\frac{\sigma _{x}^{2}}{^{\gamma }}A_{2}(\tau -s,\gamma
))ds\notag\\
&&\hspace{2cm}+\frac{A_{2}(T-\tau ,\gamma )}{A_{1}(T-\tau ,\gamma )} \notag\\
&&V_{Y}(\tau ,x) =\sigma _{x}^{2}\int_{t}^{\tau }\tilde{M}^{2}(s,\tau )ds \notag\\
&&g(X,t) =\int_{\tau =t}^{T}H_{2}(X,\tau -t)\frac{1}{2}\frac{\beta ^{2}}{%
\sigma _{S}^{2}\gamma }\frac{1}{\sqrt{1-2V_{Y}(\tau ,X)A_{1}(T-\tau ,\gamma
)/\gamma }} \notag\\
&&\hspace{1.5cm}\times\exp \Big( \frac{2}{\gamma }A_{3}(T-\tau ,\gamma )-\frac{A_{2}^{2}(T-\tau
,\gamma )}{\gamma A_{1}(T-\tau ,\gamma )}\label{func_g}\\
&&\hspace{3cm}+\frac{m_{Y}^{2}(\tau
,X)A_{1}(T-\tau ,\gamma )}{\gamma -2V_{Y}(\tau ,X)A_{1}(T-\tau ,\gamma )}%
\Big) d\tau\notag
\end{eqnarray}
From this, we can calculate:
\begin{eqnarray*}
&&\frac{\partial g}{\partial X} 
=\int_{\tau =t}^{T}H_{2}(X,\tau -t)\frac{1}{2}\frac{\beta ^{2}}{\sigma
_{S}^{2}\gamma }\frac{1}{\sqrt{1-2V_{Y}(\tau ,X)A_{1}(T-\tau ,\gamma
)/\gamma }} \\
&&\times\exp \left( \frac{2}{\gamma }A_{3}(T-\tau,\gamma )-\frac{A_{2}^{2}(T-\tau ,\gamma
)}{\gamma A_{1}(T-\tau ,\gamma )}+\frac{m_{Y}^{2}(\tau ,X)A_{1}(T-\tau
,\gamma )}{\gamma -2V_{Y}(\tau ,X)A_{1}(T-\tau ,\gamma )}\right)  \notag \\
&&\times\Big( \frac{A_{1}(\tau -t,\gamma /2)X(t)+A_{2}(\tau -t,\gamma /2)}{%
\gamma /2}\\
&&\hspace{3cm}+\frac{2m_{Y}(\tau ,X)A_{1}(T-\tau ,\gamma )}{\gamma -2V_{Y}(\tau
,X)A_{1}(T-\tau ,\gamma )}\tilde{M}(t,\tau )\Big) d\tau 
\end{eqnarray*}
\begin{theorem}
\label{thm::ou_moderate_prob}
Let $I(0)=\varepsilon $. Suppose $I(t)\leq 1$. If $\sigma _{x}<0$ then the problem (\ref{ou_moderate_prob}) has a solution such that
\begin{equation*}
I(t) =\varepsilon Z^{1/\gamma }(t)H_{1}(X(t),T-t)+\varepsilon^{2}Z^{2/\gamma
}(t)S(t)g(X(t),t)+O(\varepsilon ^{3}) 
\end{equation*}
where $Z(t)$ satisfies:
\begin{eqnarray}
&&\frac{dZ}{Z} =(-\mu +X^{2}+\frac{\beta ^{2}SI}{\sigma _{S}^{2}})dt-\frac{%
\beta \sqrt{SI}}{\sigma _{S}}dB_{1}+XdB_{2} \notag \\
&&Z(0) =\left(\frac{-H_1(X(0),T)+\sqrt{H_1^2(X(0),T)-4\varepsilon S(0)g(X(0), 0)(\mathcal O(\varepsilon^2)-1)}}{2\varepsilon S(0)g(X(0), 0)}\right)^{\gamma} \notag
\end{eqnarray}
The optimal control $\alpha ^{\ast }(t)=\alpha
_{0}(t)+\varepsilon \alpha _{1}(t)+\mathcal O(\varepsilon^2)$ where:
\begin{eqnarray*}
&&\alpha _{0}(t)=\frac{X(t)}{\gamma \sigma }-\frac{\sigma _{x}}{\gamma \sigma }%
\left( A_{1}(T-t,\gamma )X(t)+A_{2}(T-t,\gamma )\right)\\
&&\alpha _{1}(t) =\frac{Z^{1/\gamma }(t)S(t)}{H_{1}(X(t),T-t)\sigma }\Big( 
\frac{g(X(t),t)X(t)}{\gamma }-\sigma _{x}\frac{\partial g}{\partial X}
\\
&&\hspace{1cm} +\sigma _{x}\frac{g(X(t),t)}{\gamma }\left( A_{1}(T-t,\gamma
)X(t)+A_{2}(T-t,\gamma) \right)\Big)
\end{eqnarray*}
\end{theorem}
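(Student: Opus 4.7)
The plan is to combine the Hamilton--Jacobi--Bellman framework with a regular perturbation expansion in the small parameter $\varepsilon = I(0)$, using the low-infection regime of Section~\ref{sec::low} as the zeroth-order problem. Define the value function
$$V(s,i,x,t) = \sup_{\alpha\in[0,1]} \mathbb{E}\Big[-\tfrac{I(\min(\tau,T))^{1-\gamma}}{1-\gamma}\,\Big|\,S(t)=s,\,I(t)=i,\,X(t)=x\Big],$$
which satisfies the HJB $\partial_t V + \sup_{\alpha}\mathcal{L}^{\alpha}V = 0$ with terminal condition $V(s,i,x,T)=-i^{1-\gamma}/(1-\gamma)$. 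Because the Hamiltonian is quadratic in $\alpha$ (through $\tfrac12 \alpha^2\sigma^2 i^2\,V_{ii}$ together with $\sigma x i\,V_i$ and $-\sigma\sigma_x i\,V_{ix}$), the pointwise optimizer is explicit:
$$\alpha^{\star} = \frac{\sigma_x V_{ix} - x\,V_i}{\sigma\, i\, V_{ii}}.$$
Motivated by the power-utility scaling I would adopt the separated ansatz
$$V(s,i,x,t) = -\tfrac{i^{1-\gamma}}{1-\gamma}\bigl[F_0(x,t) + i\,s\,F_1(x,t) + O((is)^2)\bigr],$$
the expansion in $is$ being justified by the fact that the coupling terms $\beta si$ and $\sigma_S\sqrt{si}$ enter only through $si$, and that $I(t)=O(\varepsilon)$ along the optimal trajectory.

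The next step is to substitute the ansatz into the HJB and separate order by order in $is$. At order $(is)^0$ all $s$-coupling terms drop out and the equation for $F_0$ collapses to the low-infection HJB; the quadratic-exponential ansatz $F_0(x,\tau)=\exp(P_1(\tau)x^2/2 + P_2(\tau)x + P_3(\tau))$ yields the Riccati system (\ref{A1})--(\ref{A3}), so that $F_0$ agrees with the low-infection kernel (\ref{kernal_H0}) raised to power $\gamma$, up to a deterministic factor that will be absorbed by $Z$. Inserting $F_0$ into the feedback formula reproduces both the myopic term $x/(\gamma\sigma)$ and the intertemporal hedging term $-\sigma_x(A_1 x+A_2)/(\gamma\sigma)$ of $\alpha_0$. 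At order $is$, the HJB yields a \emph{linear} inhomogeneous PDE for $F_1(x,t)$ whose source is polynomial in $x$ with coefficients built from $F_0$, $F_{0,x}$, $\beta$ and $\sigma_S$. Because the $F_1$-term multiplies $i^{2-\gamma}$ rather than $i^{1-\gamma}$, the effective utility exponent is doubled --- this is exactly what forces $\gamma \mapsto \gamma/2$ in the Riccati coefficients and produces the kernel $H_2$ of (\ref{KernelH1}). Solving this linear PDE by Feynman--Kac with the auxiliary Gaussian process whose conditional mean $m_Y$ and variance $V_Y$ arise from the drift modification $\tilde M$ of (\ref{M_tilde}) induced by $\alpha_0$ gives the integral representation (\ref{func_g}) for $g$.

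The final step is to translate the $V$-expansion into the asserted expansion of the optimal trajectory. Define $Z$ by demanding that $I^{\star}(t) = \varepsilon Z^{1/\gamma}(t)H_1(X(t),T-t) + \varepsilon^2 Z^{2/\gamma}(t)S(t)g(X(t),t) + O(\varepsilon^3)$ hold pathwise; applying It\^o to this identity together with the optimal feedback $\alpha^\star$ and the SDEs (\ref{susceptible})--(\ref{Kequ}) recovers the stated SDE for $Z$: the drift $-\mu+X^2$ comes from the zeroth-order piece, and $\beta^2 SI/\sigma_S^2\,dt - (\beta/\sigma_S)\sqrt{SI}\,dB_1$ from the $is$-coupling. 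The initial value $Z(0)$ is then fixed by the matching $I(0)=\varepsilon$, giving the quadratic in $Z(0)^{1/\gamma}$ whose admissible root is the formula stated in the theorem. The expansion $\alpha^\star = \alpha_0 + \varepsilon\alpha_1 + O(\varepsilon^2)$ falls out by inserting the expansion of $F$ into the feedback formula and collecting powers of $\varepsilon$; the $\partial g/\partial X$ term in $\alpha_1$ is the $V_{ix}$-contribution of the correction.

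The main obstacle I anticipate is not the formal algebra but the rigorous justification of the expansion: one must verify that the series is uniformly valid up to $\min(\tau,T)$, control the $O(\varepsilon^3)$ remainder in the presence of the $\sigma_S\sqrt{SI}$ term (which is only $O(\sqrt{\varepsilon})$ and so requires more care than the other coefficients), and show that the candidate $\alpha^\star$ remains in $[0,1]$ for $\varepsilon$ sufficiently small so that the constraint is inactive. A standard verification argument --- checking that the candidate $V$ dominates the objective for any admissible control with equality under $\alpha^\star$ --- then closes the proof.
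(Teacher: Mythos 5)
Your plan is essentially correct and would arrive at the same formulas, but it follows a genuinely different route from the paper. You work on the primal side: you write the HJB for the value function $V(s,i,x,t)$, extract the feedback optimizer $\alpha^{\star}=(\sigma_x V_{ix}-xV_i)/(\sigma i V_{ii})$, and expand $V=-\tfrac{i^{1-\gamma}}{1-\gamma}[F_0+isF_1+\cdots]$, so that the zeroth order is a Kim--Omberg-type nonlinear PDE solved by the exponential-quadratic ansatz (giving the Riccati system (\ref{A1})--(\ref{A3}) and $\alpha_0$) and the first order is a linear inhomogeneous PDE solved by Feynman--Kac with a change of num\'eraire (giving $g$ and $\alpha_1$, with the $\gamma\mapsto\gamma/2$ shift coming from the doubled exponent exactly as you say). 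The paper instead works on the dual/martingale side, inherited from \cite{gatto2021optimal}: the process $Z$ is the primitive object (essentially the inverse state-price density built from the market prices of risk $X$ and $\beta\sqrt{SI}/\sigma_S$), the optimal $I$ is represented as $\varepsilon f(Z,X,S,t)$ with $f$ solving the \emph{linear} equation $(\partial_t+L_1+\varepsilon L_2)f=0$ in the dual variable, and the expansion $f=f_1+\varepsilon f_2$ with the Ansatz $f_i=Z^{2^{i-1}/\gamma}S^{2^{i-1}-1}g_i$ yields (\ref{first_eq})--(\ref{second_eq}) and the same $H_1$, $g$, and control. What the dual route buys is that the SDE for $Z$ and the budget-matching formula for $Z(0)$ are immediate, and the hierarchy of equations is linear at every order; what your primal route buys is a self-contained derivation of the feedback form of $\alpha^{\star}$ without invoking the companion paper. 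The one thin spot in your sketch is the treatment of $Z$: you cannot obtain its SDE merely "by applying It\^o to the identity" that defines the expansion, since that identity presupposes $Z$; you need the standard duality link ($Z$ as a power of the marginal utility of the optimal $I$, equivalently the adjoint process), after which the stated drift $-\mu+X^2+\beta^2SI/\sigma_S^2$ and volatilities follow. Your closing caveats (remainder control, activity of the constraint $\alpha\in[0,1]$) are well taken but note that the paper's own proof is equally formal on these points.
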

The proof is in Appendix \ref{app::proof_thm3}. We refer to \cite{gatto2021optimal} for a discussion of  $\alpha_0$. The sign of $\alpha_1$ is determined by the signs of  $\sigma$ and
\begin{equation}
\label{alpha_1_part2}
    \frac{g(X(t),t)}{\gamma}\Big(X(t)+\sigma_x\left(A_{1}(T-t,\gamma)X(t)+A_{2}(T-t,\gamma)\right)\Big)-\sigma_x\frac{\partial g}{\partial X}
\end{equation}
More specifically, $\alpha_1$ is positive if $\sigma$ and (\ref{alpha_1_part2}) are both positive or negative.  $\alpha_1$ is negative if one of them is positive and the other one is negative.

It is  obvious that the magnitude of both $g(X(t),t)$ and $\frac{\partial g}{\partial X}$ decrease with time and are equal to zero when $t=T$. Therefore, the importance of $\alpha_1$ decreases as time increases. 

To further discuss the sign of (\ref{alpha_1_part2}), we rewrite it by
$$
\left|\frac{g(X(t),t)}{\gamma}\right|\left(\left(1+\left|\sigma_x A_{1}(T-t,\gamma)\right|\right)X(t)+\bar{X}\left|\frac{\sigma_xA_{2}(T-t,\gamma)}{\bar{X}}\right|\right)+|\sigma_x|\frac{\partial g}{\partial X}
$$
Thus, suppose $\frac{\partial g}{\partial X}$, $X(t)$, and $\bar{X}$ are all positive, (\ref{alpha_1_part2}) is positive, and vice versa. In the following  cases, we provide two simple cases that we can easily discuss the sign of $\alpha_1$:
\begin{itemize}
    \item if $\frac{\partial g}{\partial X}\sigma>0$, $\mu-\mu_1>\max(K_1(t),\bar{k}_1)$, then  $\alpha_1$ is positive.
    \item if $\frac{\partial g}{\partial X}\sigma<0$, $\mu-\mu_1<\min(K_1(t),\bar{k}_1)$,  then  $\alpha_1$ is negative.
\end{itemize}

In the following, we discuss the full expansion of the solution in Theorem \ref{thm::ou_moderate_prob}. Consider equation (57) in \cite{gatto2021optimal}:
\begin{equation*}
\Big(\frac{\partial }{\partial t}+L_{1}+\varepsilon L_{2}\Big)f=0 
\end{equation*}
This time we use full asymptotic expansion:%
\begin{equation*}
f=f_{1}+\varepsilon f_{2}+\ldots=\sum_{i=1}^\infty f_i \varepsilon^{i-1}
\end{equation*}
and obtain:
\begin{eqnarray*}
0
&=&\Big(\frac{\partial }{\partial t}+L_{1}\Big)f_{1}+\sum_{i=1}^\infty\Big(\Big(\frac{\partial }{\partial t}+L_{1}\Big)f_{i+1}+L_2f_i\Big)\varepsilon^i
\end{eqnarray*}
The terms of our asymptotic expansion are thus determined by:%
\begin{eqnarray}
&&\Big( \frac{\partial }{\partial t}+L_{1}\Big) f_{1} =0  \label{first_eq_full_expansion}
\\
&&\Big(\frac{\partial }{\partial t}+L_{1}\Big)f_{i+1} =-L_{2}f_{i} ~~~~~~i=1, 2, \ldots \label{second_eq_full_expansion}
\end{eqnarray}
We use the Ansatz: 
$$
f_i(Z(t),  X(t), t) = Z(t)^{2^{i-1}/\gamma}S(t)^{2^{i-1}-1}g_i(X(t), t)~~~~~~i=1, 2, \ldots
$$
We have showed that $g_1 =H_1 $ and $g_2=g$ in the proof of Theorem \ref{thm::ou_moderate_prob}. By the same process, we can also calculate the expressions for $g_3$, $g_4, \ldots$ in the sequel. 
\subsection{Constant Treatment Rate}
The problem is:
\begin{gather}
\label{constant_moderate_prob}
\begin{aligned}
&\hspace{2cm}\sup \mathbb E\big[-\frac{I(T)^{1-\gamma }}{1-\gamma }\big] \\
&dS(t)=-\beta S(t)I(t)dt+\sigma _{S}\sqrt{S(t)I(t)}dB_{1}(t) \\
&dI(t)=(r+\alpha (t)(b-r))I(t)dt+\alpha (t)\sigma I(t)dB_{2}(t)-\sigma _{S}%
\sqrt{S(t)I(t)}dB_{1}(t)
\end{aligned}
\end{gather}
Let $\tau = T-t$, the solution kernels $h_{i}(\tau)$ for $i=1,2,\ldots$ are given by:
\begin{equation}
h_{i}(\tau)=\exp \Big( \frac{2^{i-1}}{\gamma }\Big(\frac{a_{i,1} }{2%
}\big(\frac{b-r}{\sigma}\big)^2+a_{i,2} \Big)\tau \Big)  \label{Kernelh}
\end{equation}
where
\begin{eqnarray}
a_{i,1} &=&\frac{1-\gamma/2^{i-1} }{\gamma/2^{i-1} }\label{ai1}\\
a_{i,2} &=&(\gamma/2^{i-1}-1)\mu  \label{ai2}
\end{eqnarray}

\begin{theorem}
\label{thm::constant_moderate}
Let $I(0)=\varepsilon $, then the problem (\ref{constant_moderate_prob}) has a
solution such that
\begin{eqnarray*}
I(t) =\sum_{i=1}^\infty Z(t)^{2^{i-1}/\gamma}S(t)^{2^{i-1}-1}g_i( t)\varepsilon^i
\end{eqnarray*}%
where $g_1(t)=h_1(T-t)$, $g_i(t)$, $i>1$ can be obtained by (\ref{g_i}), and $Z(t)$ satisfies:
\begin{eqnarray*}
&&\frac{dZ}{Z} =\Big(-\mu +\big(\frac{b-r}{\sigma}\big)^2+\frac{\beta ^{2}SI}{\sigma _{S}^{2}}\Big)dt-\frac{%
\beta \sqrt{SI}}{\sigma _{S}}dB_{1}+\frac{b-r}{\sigma}dB_{2}
\label{dz_secondmodel} \\
&&1 =\sum_{i=1}^\infty Z(0)^{2^{i-1}/\gamma}S(0)^{2^{i-1}-1}g_i( 0)\varepsilon^{i-1}
\end{eqnarray*}
Moreover the optimal proportion undergoing treatment  $\alpha ^{\ast
}(t)$ equal to $\alpha _{0}(t)+\varepsilon \alpha
_{1}(t)+\mathcal O(\varepsilon ^{2})$, where $\alpha _{0}(t)$ and $\alpha _{1}(t)$ are
equal to
\begin{equation*}
\alpha_0=\frac{b-r}{\gamma\sigma^2}
~~~~~~~~
\alpha_1 =\frac{Z^{1/\gamma}(t)S(t)}{h_1(T-t)g_2(t)}\frac{b-r}{\gamma\sigma^2}
\end{equation*}
\end{theorem}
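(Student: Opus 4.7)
The plan is to mirror the proof of Theorem \ref{thm::ou_moderate_prob}, specialized to the constant-treatment-rate setting where the state variable $X$ drops out and every PDE in the $X$-direction degenerates to an ODE in time. I would first write the HJB equation for the value function $V(s,i,t)$, and then apply the convex-duality transformation of \cite{cvitanic1992convex} (as already invoked in Theorem \ref{thm::optimal_control_low_regime}) by introducing the dual process $Z(t)$ with the SDE stated in the theorem, chosen so that $ZI$ becomes a local martingale once the $S$-coupling is accounted for. The pair $(S(t),Z(t))$ then replaces $(S(t),I(t))$ as state, and one obtains a linear PDE of the form $(\partial_t+L_1+\varepsilon L_2)f=0$, where $L_1$ collects the $Z$-dynamics in isolation and $\varepsilon L_2$ gathers the cross-couplings arising from the $\beta SI$ drift and $\sigma_S\sqrt{SI}$ diffusion; this is exactly the decomposition used just before Theorem \ref{thm::ou_moderate_prob}.

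\textbf{Key steps.} I would then plug in the full asymptotic expansion $f=\sum_{i\ge 1}f_i\varepsilon^{i-1}$ together with the Ansatz $f_i=Z^{2^{i-1}/\gamma}S^{2^{i-1}-1}g_i(t)$ and match powers of $\varepsilon$ to recover the cascade (\ref{first_eq_full_expansion})--(\ref{second_eq_full_expansion}). Because the treatment drift is now the constant $b-r$ rather than $\sigma X(t)$, applying It\^o to $Z^{2^{i-1}/\gamma}$ replaces every occurrence of $X$ by $(b-r)/\sigma$, so the leading equation $(\partial_t+L_1)f_1=0$ collapses to the scalar linear ODE
\begin{equation*}
g_1'(t)+\tfrac{1}{\gamma}\Big(\tfrac{a_{1,1}}{2}\big(\tfrac{b-r}{\sigma}\big)^2+a_{1,2}\Big)g_1(t)=0,\qquad g_1(T)=1,
\end{equation*}
whose solution is $g_1(t)=h_1(T-t)$ with coefficients (\ref{ai1})--(\ref{ai2}). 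For $i\ge 2$, the self-similar doubling structure of the Ansatz guarantees that $-L_2 f_i$ is again of the form $Z^{2^{i}/\gamma}S^{2^{i}-1}\times(\text{function of }t)$, so $(\partial_t+L_1)f_{i+1}=-L_2f_i$ becomes a linear inhomogeneous ODE for $g_{i+1}(t)$ with terminal condition $g_{i+1}(T)=0$, which integrates explicitly and yields the recursion (\ref{g_i}). The matching condition $I(0)=\varepsilon$ then pins down $Z(0)$ through the stated identity $1=\sum_{i\ge 1}Z(0)^{2^{i-1}/\gamma}S(0)^{2^{i-1}-1}g_i(0)\varepsilon^{i-1}$, and the optimal control is read off from the HJB first-order condition: to leading order it coincides with the unconstrained limit of Theorem \ref{thm::optimal_control_low_regime}, namely $\alpha_0=(b-r)/(\gamma\sigma^2)$, while $\alpha_1$ emerges as the $f_2/f_1$ correction multiplied by the same factor.

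\textbf{Main obstacle.} The crux is verifying that the doubling Ansatz is self-consistent at every order: I must check that $L_1$ preserves the functional form $Z^{2^{i-1}/\gamma}S^{2^{i-1}-1}g_i(t)$ up to a scalar multiplier absorbable into the ODE for $g_i$, and that $L_2$ maps it exactly to $Z^{2^{i}/\gamma}S^{2^{i}-1}\times(\cdot)$ so that the Ansatz closes under the recursion. This is a combinatorial check on how the $(Z,S)$-exponents transform under the drift and diffusion of $Z$, and in particular hinges on the $\beta^2 SI/\sigma_S^2$ term in $dZ/Z$ combining with the leading-order substitution $I\sim\varepsilon Z^{1/\gamma}g_1$ to produce exactly the required doubling of the $Z$-exponent. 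Beyond this algebraic closure, a standard verification-theorem argument is needed to show that the truncation error at order $N$ is $O(\varepsilon^{N+1})$ under the candidate control $\alpha^\ast$; this is routine once the formal series and the first-order HJB condition are in place.
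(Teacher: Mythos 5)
Your proposal follows essentially the same route as the paper: the appendix proof specializes the operators $L_1$ and $L_2$ of \cite{gatto2021optimal} to constant $X$, inserts the same doubling Ansatz $f_i=Z^{2^{i-1}/\gamma}S^{2^{i-1}-1}g_i(t)$ into the cascade (\ref{first_eq_full_expansion})--(\ref{second_eq_full_expansion}), reduces each order to a scalar linear ODE in $t$ whose integration gives $h_1$ and the recursion (\ref{g_i}), and reads off $\alpha_0,\alpha_1$ from the first-order condition exactly as you describe. The only difference is that the paper inherits the $(\partial_t+L_1+\varepsilon L_2)f=0$ decomposition and the $Z$-dynamics directly from the earlier work rather than re-deriving them, and it does not carry out the verification/truncation-error step you rightly flag as needed for full rigor.
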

The proof is in Appendix \ref{app::proof_thm4}, where we also provide a formula for $g_3$. Observe that $$
g_2(t)=\frac{\beta ^{2}}{2\sigma _{S}^{2}}\frac{h_2(T-t)-h_1^2(T-t)}{\big(\frac{b-r}{\sigma}\big)^2(a_{1,1}-a_{2,1})+2(a_{1,2}-a_{2,2})} = \frac{\beta ^{2}}{2\sigma _{S}^{2}}\frac{h_2(T-t)-h_1^2(T-t)}{\gamma\mu-\big(\frac{b-r}{\sigma}\big)^2/\gamma}
$$ is always positive because the signs of $h_2(T-t)-h_1^2(T-t)$ and $\gamma\mu-\big(\frac{b-r}{\sigma}\big)^2/\gamma$ are the same. The signs of $\alpha_0$ and $\alpha_1$ are determined by the sign of $\frac{r-b}{\sigma^2}$.

\section{Application to COVID-19}
\label{sec::application}
We use the same data set and parameters (see Table \ref{tab:parameters}) as in \cite{gatto2021optimal}, but this time we show the optimal control (result of Theorem \ref{thm::optimal_control_low_regime}) of problem (\ref{ConstantTreatment}).
\begin{table}[!htb]
    \centering
    \begin{tabular}{|l|l|l|}
     \hline
Treatment Parameter & Symbol & Value \\ \hline\hline
Death rate/no treatment & $\mu _{0}$ & 0.0575 \\ \hline
Death rate & $\mu _{1}$ & 0.0575 \\ \hline
Recovery rate/ no treatment & $K_{0}$ & 0.2559 \\ \hline
Recovery rate at time 0 & $K_{1}(0)$ & 0.2559 \\ \hline
Long run value of recovery rate & $\bar{k}_{1}$ & 0.4612 \\ \hline
Volatility of the measurement of today's recovery rate & $\sigma $ & 0.4418 \\ 
\hline
Volatility of changes in the recovery rate & $\sigma _{k}$ & -1.1647 \\ \hline
Speed of mean-reversion of the recovery rate & $\lambda _{k}$ & 0.7692 \\
\hline
Transmission rate & $\beta$ & 0.025\\
\hline
Proportion  of infected at time 0 & $\varepsilon$ &0.01\\
 \hline
 Time step &$\Delta t$ & 0.001\\
\hline
    \end{tabular}
    \caption{Parameters}
    \label{tab:parameters}
\end{table}
We compare in Figure \ref{fig:simulation_plot} three types of treatment:
\begin{itemize}
\setlength\itemsep{-0.1em}
    \item no treatment
    \item full control, i.e., $\alpha(t)=1$
    \item optimal control, given in  Theorem \ref{thm::optimal_control_low_regime}
\end{itemize}
We can see that, for all risk-aversion parameters $\gamma$ considered (between $-1$ and $-5$), our control is better.
\begin{figure}[!htb]
    \centering
    \includegraphics[width=\textwidth]{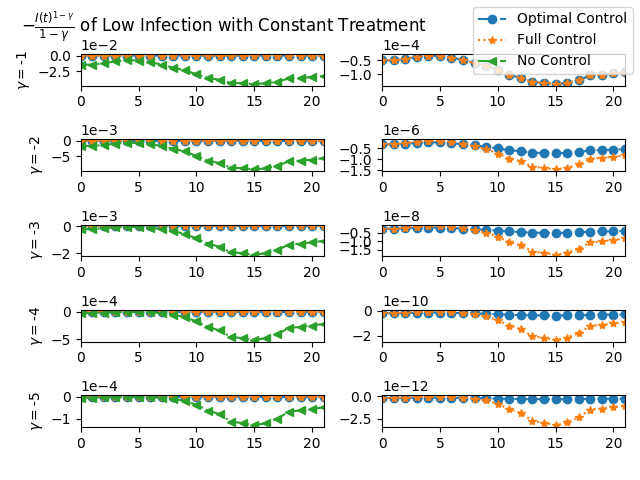}
    \caption{Optimal control of low infection with constant treatment. Weekly US COVID-19 data from June 7, 2020 to November 1, 2020. Github  repository for generating the plot: \href{https://github.com/yujiading/optimal-control-sir-model}{https://github.com/yujiading/optimal-control-sir-model}. }
    \label{fig:simulation_plot}
\end{figure}
\section{Conclusion}
\label{sec::conclusion}
We showed that a stochastic optimal control approach enables to fight the COVID-19 epidemic better. Many interesting problems remain to be solved. For instance, we could analytic constrained policies in the multiple treatment case or the Ornstein-Uhlenbeck case. Optimal vaccination is another area where we believe a similar asymptotic approach can be used. Finally, \cite{bertozzi2020challenges} use Hawkes processes to model COVID-19. The control of Hawkes processes remains a largely open problem that deserves attention, in particular for its application to epidemiology.

\begin{appendices}

\section{Proof of Theorem \ref{thm::uniqueness}}
\label{app::proof_uniquenss_thm}
We follow the proof in \citet[Prop.~2.13, Sec.~5.2]{karatzas2014brownian}. They consider the one-dimensional case.
Let $h:[0,\infty )\rightarrow \lbrack 0,\infty )$ be a strictly increasing
function with $h(0)$ and 
\begin{equation}
\int_{(0,\varepsilon )}h^{-2}(u)=\infty \text{, \ \ }\forall \varepsilon >0
\label{bof}
\end{equation}
In our case, we take $h(x)=x$. \ Because of (\ref{bof}), there exists a
strictly decreasing sequence $\{a_{n}\}\subset (0,1]$ with $a_{0}$ and $%
\lim_{n\rightarrow \infty }a_{n}=0$ such that $%
\int_{a_{n}}^{a_{n-1}}h^{-2}(u)du=n$. For every $n$ there exists continuous
function $\rho _{n}$ on $\mathbb{R}$ with support on $(a_{n},a_{n-1})$ so
that
\begin{equation*}
0\leq \rho _{n}(x)\leq \frac{2}{nh(x)}\text{, \ \ }x>0
\end{equation*}
and $\int_{a_{n-1}}^{a_{n}}\rho _{n}(u)du=1$. Then the function 
\begin{equation*}
\Psi _{n}(x)=\int_{0}^{|x|}\int_{0}^{y}\rho _{n}(u)dudy
\end{equation*}
is even and twice continuously differentiable, with $|\Psi _{n}^{\prime
}(x)|\leq 1$ and $\lim_{n\rightarrow \infty }$ $\Psi_{n}(x)$ $=|x|$. Suppose
there are two strong solutions $(I^{(1)},S^{(1)})$ and $(I^{(2)},S^{(2)})$,
\begin{eqnarray*}
&&d(I^{(1)}-I^{(2)}-\mathbb E[I^{(1)}-I^{(2)}])-\alpha \sigma (I^{(1)}-I^{(2)})dB_{2}
\\
&&~~~=-\sigma _{S}\left( \sqrt{S^{(1)}I^{(1)}}-\sqrt{S^{(2)}I^{(2)}}\right) dB_{1}\\
&&~~~=
-d(S^{(1)}-S^{(2)}-\mathbb E[S^{(1)}-S^{(2)}])
\end{eqnarray*}
so that
\begin{eqnarray*}
&&(d(I^{(1)}-I^{(2)}))^{2} <\sigma _{S}^{2}(
S^{(1)}I^{(1)}-S^{(2)}I^{(2)}) dt+(\alpha \sigma
)^{2}(I^{(1)}-I^{(2)})^{2}dt \\
&&(d(S^{(1)}-S^{(2)}))^{2} <\sigma _{S}^{2}(
S^{(1)}I^{(1)}-S^{(2)}I^{(2)}) dt \\
&&d((I^{(1)}-I^{(2)})(S^{(1)}-S^{(2)})) <-\sigma _{S}^{2}(
S^{(1)}I^{(1)}-S^{(2)}I^{(2)}) dt
\end{eqnarray*}
Thus, since $|\Psi _{n}^{\prime }|<1$,
\begin{eqnarray*}
&&\mathbb E[d\Psi _{n}(I_{t}^{(1)}-I_{t}^{(2)})+d\Psi _{n}(S_{t}^{(1)}-S_{t}^{(2)})]
\\
&&~~~=\mathbb E[\Psi _{n}^{\prime }(I_{t}^{(1)}-I_{t}^{(2)})(d(I^{(1)}-I^{(2)}))+\Psi
_{n}^{\prime }(S_{t}^{(1)}-S_{t}^{(2)})(d(S^{(1)}-S^{(2)}))] \\
&&~~~~~~+\frac{1}{2}\mathbb E[\Psi _{n}^{\prime \prime
}(I_{t}^{(1)}-I_{t}^{(2)})(d(I^{(1)}-I^{(2)}))^{2}]\\
&&~~~~~~+\frac{1}{2}\mathbb E[\Psi _{n}^{\prime \prime
}(S_{t}^{(1)}-S_{t}^{(2)})(d(S^{(1)}-S^{(2)}))^{2}] \\
&&~~~\leq \mathbb E[2|\beta||S^{(1)}I^{(1)}-S^{(2)}I^{(2)}|dt+|D||I^{(1)}-I^{(2)}|dt] \\
&&~~~~~~+\frac{1}{2}\mathbb E[\Psi _{n}^{\prime \prime }(I_{t}^{(1)}-I_{t}^{(2)})\sigma _{S}^{2}(
S^{(1)}I^{(1)}-S^{(2)}I^{(2)}) dt]\\
&&~~~~~~+\frac{1}{2}\mathbb E[\Psi _{n}^{\prime \prime }(S_{t}^{(1)}-S_{t}^{(2)})\sigma _{S}^{2}(
S^{(1)}I^{(1)}-S^{(2)}I^{(2)}) dt] \\
&&~~~~~~+\frac{1}{2}\mathbb E[\Psi _{n}^{\prime \prime }(I_{t}^{(1)}-I_{t}^{(2)})(\alpha
\sigma )^{2}(I^{(1)}-I^{(2)})^{2}]dt
\end{eqnarray*}
where
\begin{equation*}
D=-(K_{0}+\mu _{0})+\alpha (K_{0}-K_{1}+\mu _{0}-\mu _{1})
\end{equation*}
Observe that:
\begin{eqnarray*}
S^{(1)}I^{(1)}-S^{(2)}I^{(2)}
&=&S^{(1)}(I^{(1)}-I^{(2)})+I^{(2)}(S^{(1)}-S^{(2)}) \\
&<&|I^{(1)}-I^{(2)}|+|S^{(1)}-S^{(2)}|
\end{eqnarray*}
Since $\Psi _{n}^{\prime \prime }<2/nh$ and $h$ is positive,
\begin{eqnarray*}
&&\left( \Psi _{n}^{\prime \prime }(I_{t}^{(1)}-I_{t}^{(2)})+\Psi _{n}^{\prime
\prime }(S_{t}^{(1)}-S_{t}^{(2)})\right) \left(
S^{(1)}I^{(1)}-S^{(2)}I^{(2)}\right)  \\
&&~~~<\frac{2}{n}\left( \frac{1}{h(|I^{(1)}-I^{(2)}|)}+\frac{1}{%
h(|S^{(1)}-S^{(2)}|)}\right) (|I^{(1)}-I^{(2)}|+|S^{(1)}-S^{(2)}|) \\
&&~~~<\frac{2}{n}\left( \frac{|I^{(1)}-I^{(2)}|}{h(|I^{(1)}-I^{(2)}|)}+\frac{%
|S^{(1)}-S^{(2)}|}{h(|S^{(1)}-S^{(2)}|)}\right)
\end{eqnarray*}%
Taking $h(x)=x$ results in
\begin{eqnarray*}
&&\mathbb E[d\Psi _{n}(I_{t}^{(1)}-I_{t}^{(2)})+d\Psi
_{n}(S_{t}^{(1)}-S_{t}^{(2)})]\\
&&~~~<\Big( \mathbb E[(2|\beta|+|D|)|I^{(1)}-I^{(2)}|]+\mathbb E[2|\beta||S^{(1)}-S^{(2)}|]\\
&&~~~~~~+\frac{2\sigma_S^2}{n}%
+\frac{(\alpha \sigma )^{2}}{2}\mathbb E[|I^{(1)}-I^{(2)}|]\Big) dt
\end{eqnarray*}
Since $\lim_{n\rightarrow \infty }\Psi _{n}(x)=|x|$,
\begin{eqnarray*}
&&\mathbb E[|I_{t}^{(1)}-I_{t}^{(2)}|+|S_{t}^{(1)}-S_{t}^{(2)}|]\\
&&~~~<
\int_{0}^{t}\mathbb E[(2|\beta|+|D_{s}|)|I_{s}^{(1)}-I_{s}^{(2)}|]+\mathbb E[2|\beta||S_s^{(1)}-S_s^{(2)}|]\\
&&~~~~~~+\frac{(\alpha_s \sigma )^{2}}{2}\mathbb E[|I_s^{(1)}-I_s^{(2)}|]ds
\end{eqnarray*}
But,
$$
\mathbb E[(2|\beta|+|D_{s}|)|I_{s}^{(1)}-I_{s}^{(2)}|]
<\sqrt{\mathbb E[(2|\beta|+|D_{s}|)^{2}]}\sqrt{\mathbb E[|I_{s}^{(1)}-I_{s}^{(2)}|^{2}]}
$$
Since $|I_{s}^{(1)}-I_{s}^{(2)}|<1$, $\mathbb E[(I_{s}^{(1)}-I_{s}^{(2)})^{2}]<1$
and $\sqrt{\mathbb E[(I_{s}^{(1)}-I_{s}^{(2)})^{2}]}<\mathbb E[|I_{s}^{(1)}-I_{s}^{(2)}|]$ thus
\begin{eqnarray*}
&&\mathbb E[|I_{t}^{(1)}-I_{t}^{(2)}|+|S_{t}^{(1)}-S_{t}^{(2)}|]\\
&&~~~<\int_{0}^{t}\left( \sqrt{\mathbb E[(2|\beta|+|D_{s}|)^{2}]}%
+\frac{(\alpha \sigma )^{2}}{2}\right) \mathbb E[|I_{s}^{(1)}-I_{s}^{(2)}|]\\
&&~~~~~~+2|\beta|\mathbb E[|S_{s}^{(1)}-S_{s}^{(2)}|]ds\\
&&~~~<\int_{0}^{t}\max\left( \sqrt{\mathbb E[(2|\beta|+|D_{s}|)^{2}]}%
+\frac{(\alpha \sigma )^{2}}{2}, 2|\beta|\right)\\
&&~~~~~~\times\mathbb E[|I_{s}^{(1)}-I_{s}^{(2)}|+|S_{s}^{(1)}-S_{s}^{(2)}|]ds
\end{eqnarray*}
and local uniqueness follows by Gronwall's inequality.

\section{Proof of Theorem \ref{thm::optimal_control_low_regime}}
\label{proof::optimal_control_low_regime}
We refer to the problem treated by \cite{gatto2021optimal} as the \textit{%
unconstrained} problem. Indeed, in that problem $\alpha $ was not
constrained. We refer to our problem as the \textit{constrained} problem. We
follow the method of proof in \cite{cvitanic1992convex}, referred to hereafter as CK. They introduce 
\textit{auxiliar}y problems, which are unconstrained. They show that there
exists an auxiliary problem which solution can be used to construct the
solution of the original constrained problem. We follow the numbering of the
sections in CK in order to ease understanding.
\paragraph{CK Section 2. The Model.}
To ease the correspondence with the CK paper, we define $b-r=K_{0}+\mu _{0}-\mu _{1}-\bar{k}_{1}$, $\theta:=(b-r)/\sigma$, and
\begin{eqnarray*}
H^{(0)}(t) =\exp (-rt)\exp \Big(-\int_{0}^{t}\theta dB_2(s)-\frac{1}{2}%
\int_{0}^{t}\theta^{2}ds\Big) 
\end{eqnarray*}
Observe that
$\mathbb E[\int_{0}^{t}\theta ^{2}ds]<\infty$.
\paragraph{CK Section 3. Portfolio and consumption processes.}
Define:
\begin{equation*}
B_2^{(0)}(t)=B_2(t)+\int_{0}^{t}\theta ds
\end{equation*}

Denote by $I^{i,\alpha }$ the infected process subject to $I(0)=i$ and
control $\alpha $. It is admissible if 
\begin{equation*}
0\leq I^{i,\alpha }(t)\leq 1\text{ \ \ \ }\forall~ 0\leq t\leq T
\end{equation*}

The set of admissible $\alpha $ is denoted $\mathcal{A}_{0}(i)$. Note that
(See (3.5)\ in CK)
\begin{eqnarray*}
H^{(0)}(t)I(t) =i+\int_{0}^{t}H^{(0)}(s)I(s)(\alpha (s)\sigma -\theta)dB_{2}(s)
\end{eqnarray*}
\paragraph{CK Section 4. Convex sets and their support functions.}

The difference between CK and this paper is that our objective is to minimize. This means that the key relation between our
auxiliary infected and infected is reversed compared to the first equation
in CK. Indeed if $\alpha _{\nu }$ solves the auxiliary problem and $\alpha $
the original problem, we must have:
\begin{equation*}
I_{\nu }^{i,\alpha _{\nu }}(t)\leq I^{i,\alpha }(t)
\end{equation*}

Define 
\begin{equation}
\label{delta_function}
\delta (\nu )=\left\{ 
\begin{array}{cc}
0 & \nu <0 \\ 
\nu & \nu >0
\end{array}
\right.
\end{equation}
It is subadditive:%
\begin{equation}
\delta (\lambda +\nu )\leq \delta (\lambda )+\delta (\nu )
\label{superadditive}
\end{equation}
\paragraph{CK Section 5. Utility functions.}
The main difference between our utility functions and the utility functions
in financial economics is that our utility functions are decreasing for
positive arguments. Recall indeed that our utility function is, for $\gamma<0$:
$$
U(i) = -\frac{i^{1-\gamma}}{1-\gamma}
$$
Since 
$$
U'(i) = -i^{-\gamma}
$$
We have $\lim_{i\rightarrow \infty }U'(i)=-\infty$ and $\lim_{i\rightarrow 0}U'(i)=0$, again for $\gamma<0$. This is  unlike CK and \cite{wachter2002portfolio} who consider the case $0<\gamma<1$ with utility of wealth $U_2(x)=\frac{x^{1-\gamma}}{1-\gamma}$. In their case, $\lim_{x\rightarrow \infty }U_2'(x)=0$ and $\lim_{x\rightarrow 0}U_2'(x)=\infty$.

We define $I_{2}$ to be the inverse of $U^{\prime }$, with $I_{2}(y)$ on $y\leq0$. By straighforward calculations:
$$
I_{2}(y) = (-y)^{-1/\gamma}
$$

We also define the Legendre-Fenchel dual
\begin{equation*}
\tilde{U}(y)=\max_{x>0}[U(x)-xy]=U(I_{2}(y))-yI_{2}(y)
\end{equation*}
This function satisfies:
\begin{equation*}
\tilde{U}^{\prime }(y)=-I_{2}(y)~~~y\leq0
\end{equation*}
\paragraph{CK Section 6. The constrained and unconstrained optimization problems.}
We define:%
\begin{equation*}
\mathcal{A}^{\prime }(i)=\{\alpha \in \mathcal{A}_{0}(i)|0\leq \alpha \leq
1\}
\end{equation*}
The supremum of the unconstrained problem is denoted by $V_{0}$, while the
supremum of the constrained problem is denoted by $V$, namely:
\begin{eqnarray*}
V_{0}(i) &=&\sup_{\alpha \in \mathcal{A}_{0}(i)}\mathbb E[U(I^{i,\alpha
}(T))|I(0)=i] \\
V(i) &=&\sup_{\alpha \in \mathcal{A}^{\prime }(i)}\mathbb E[U(I^{i,\alpha
}(T))|I(0)=i]
\end{eqnarray*}
\paragraph{CK Section 7. Solution of the unconstrained problem.}
We note that the expectation
\begin{equation*}
\mathcal{X}_{0}(y)\equiv \mathbb E[H^{(0)}(T)I_{2}(yH^{(0)}(T))]
\end{equation*}
is finite for every  $y\in (-\infty ,0]$. We define its inverse $\mathcal{Y}_{0}$:%
\begin{equation*}
\mathcal{Y}_{0}(\mathcal{X}_{0}(y))=y
\end{equation*}
The solution of the unconstrained problem is well-known, and equal to:
$$
\alpha(s) =\frac{\theta}{\sigma\gamma}=\frac{r-b}{\sigma^2|\gamma|}
$$
\paragraph{CK Section 8. Auxiliary unconstrained optimization problems.}
Recall $\delta(\nu)$ in (\ref{delta_function}). It is easily seen that:
$$
\alpha\nu-\delta(\nu)=\left\{\begin{array}{ll}
    \alpha\nu & \nu<0 \\
   (\alpha-1)\nu  & \nu>0
\end{array}\right\}\leq0
$$
We introduce a new process $I^{(\nu )}$ by:
\begin{equation*}
\frac{dI^{(\nu )}(t)}{I^{(\nu )}(t)}=(r+\alpha (t)\nu (t)-\delta (\nu(t)))dt+\alpha (t)\sigma dB_{2}^{(0)}(t)
\end{equation*}
Likewise we introduce
\begin{eqnarray*}
&&\theta ^{(\nu )} =\theta+\nu/\sigma \\
&&B^{(\nu )}(t) =B_2(t)+\int_{0}^{t}\theta ^{(\nu )}(s)ds \\
&&H^{(\nu )}(t) =\exp \Big(-rt+\int_{0}^{t}\delta (\nu (s))ds\Big)\mathcal{E}\Big(%
-\int_{0}^{t}\theta ^{(\nu )}(s)dB_2(s)\Big)\\
&&\mathcal{E}\Big(%
-\int_{0}^{t}\theta ^{(\nu )}(s)dB_2(s)\Big)\equiv \exp
\Big(-\int_{0}^{t}\theta ^{(\nu )}(s)dB_2(s)-\frac{1}{2}\int_{0}^{t}(\theta^{(\nu )}(s))^{2}ds\Big)
\end{eqnarray*}
We denote by $\mathcal{A}_{\nu }^{\prime }(i)$ the class of $\alpha $ for which%
\begin{equation*}
I_{\nu }^{i,\alpha }(t)\leq 1
\end{equation*}

Since the solution of our dual problem will have $\alpha(t)\nu (t)-\delta
(\nu (t))\leq 0$, clearly $\mathcal{A}^{\prime }(i)\subset \mathcal{A}_{\nu
}^{\prime }(i)$. We define:%
\begin{equation*}
V_{\nu }(i)=\sup_{\pi \in \mathcal{A}_{\nu }^{\prime }(i)}\mathbb E[U(I^{i,\alpha
}(T))]
\end{equation*}
\begin{equation*}
\mathcal{X}_{\nu }(y)\equiv \mathbb E[H^{(\nu )}(T)I_{2}(yH^{(\nu )}(T))]
\end{equation*}

We define a class of progressively measurable processes $\nu $ in $\mathbb{R}
$ by:
\begin{eqnarray*}
\mathcal{D}' =\Big\{\nu ;\mathbb E\int_{0}^{T}\delta (\nu(t))dt\leq \infty
,\mathbb E\int \nu ^{2}(t)dt<\infty ,\mathcal{X}_{\nu }(y)<\infty ,y\in
(-\infty,0] \Big\}
\end{eqnarray*}

Proposition 8.3. in CK shows that, if for some $\lambda \in $ $\mathcal{D}^{\prime
}$ the corresponding control $\alpha _{\lambda }$ is optimal for the
auxiliary optimization problem and if 
\begin{equation*}
-\delta (\lambda )+\alpha _{\lambda }(t)\lambda (t)=0
\end{equation*}
then $\alpha \in \mathcal{A}^{\prime }(i)$ and is optimal for the
constrained problem.

The solution of the unconstrained problem is:
\begin{equation}
    \alpha(s) =\frac{\theta^{(\nu)}}{\sigma\gamma}=\frac{\theta+\nu/\sigma}{\sigma\gamma}=\frac{r-b-\nu}{\sigma^2|\gamma|}
    \label{alpha_solution}
\end{equation}

\paragraph{CK Section 9. Contingent claims attainable by constrained portfolios.}
We sketch the proof of theorem 9.1 in CK, as the signs are different, and
the structure of the control is slightly different.
\begin{namedtheorem}[CK 9.1]
Let $B$ be a positive $\mathcal{F}_{T}$-measurable random
variable and suppose there is a process $\lambda \in \mathcal{D}'$ such that,
for all $\nu \in \mathcal D'$%
\begin{equation}
\mathbb E[H^{(\nu )}(T)B]\leq \mathbb E[H^{(\lambda )}(T)B]:=i  \label{maincond}
\end{equation}
Then there exists a control $\alpha \in \mathcal{A}^{\prime }(i)$ such that $%
I^{i,\alpha }=B$.
\end{namedtheorem}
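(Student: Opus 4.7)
The plan is to adapt Theorem 9.1 of \cite{cvitanic1992convex} to our minimisation setting. First I would introduce the positive $\mathbb P$-martingale $M(t):=\mathbb E[H^{(\lambda)}(T)B\mid \mathcal F_t]$, noting that $M(0)=i>0$ and $M(t)>0$ almost surely because both $B$ and $H^{(\lambda)}(T)$ are positive. Since in the constant-treatment low-infection regime the filtration is generated solely by $B_2$, the Brownian martingale representation theorem yields a predictable process $\psi$ with
\begin{equation*}
M(t)=i+\int_0^t \psi(s)\,dB_2(s).
\end{equation*}
Defining the candidate infected trajectory $I(t):=M(t)/H^{(\lambda)}(t)$, we automatically have $I(0)=i$ and $I(T)=B$.

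Next I would compute $dI/I$ via Ito's formula. Using $dH^{(\lambda)}/H^{(\lambda)}=(-r+\delta(\lambda))\,dt-\theta^{(\lambda)}\,dB_2$ and hence $d(1/H^{(\lambda)})/(1/H^{(\lambda)})=(r-\delta(\lambda)+(\theta^{(\lambda)})^2)\,dt+\theta^{(\lambda)}\,dB_2$, the product rule gives
\begin{equation*}
\frac{dI(t)}{I(t)}=\Bigl[r-\delta(\lambda(t))+\theta^{(\lambda)}(t)\bigl(\theta^{(\lambda)}(t)+\tfrac{\psi(t)}{M(t)}\bigr)\Bigr]dt+\Bigl[\theta^{(\lambda)}(t)+\tfrac{\psi(t)}{M(t)}\Bigr]dB_2(t).
\end{equation*}
Matching against the SDE in (\ref{ConstantTreatment}), whose diffusion is $\alpha\sigma$ and drift $r+\alpha\sigma\theta$, I would set $\alpha(t):=\sigma^{-1}\bigl(\theta^{(\lambda)}(t)+\psi(t)/M(t)\bigr)$. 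The drift identity then reduces algebraically (using $\theta^{(\lambda)}-\theta=\lambda/\sigma$) to the complementary slackness $\alpha(t)\lambda(t)-\delta(\lambda(t))=0$, which is precisely the hypothesis invoked by Proposition~8.3 of CK to conclude that $\alpha$ solves the constrained problem once admissibility is established.

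The main obstacle is showing that $\alpha\in\mathcal A'(i)$, i.e.\ $0\le\alpha\le 1$ a.e. This I would handle by a variational argument exploiting the maximality of $\lambda$ in (\ref{maincond}): for an arbitrary bounded $\rho$ with $\lambda+\epsilon\rho\in\mathcal D'$ for small $\epsilon$, differentiating $\epsilon\mapsto\mathbb E[H^{(\lambda+\epsilon\rho)}(T)B]$ at $\epsilon=0$ yields a first-order inequality that, when combined with the explicit form of $\delta$ in (\ref{delta_function}) and with the identification $\alpha\sigma=\theta^{(\lambda)}+\psi/M$, forces $\alpha\le 1$ on $\{\lambda>0\}$ (take $\rho$ supported there with a suitable sign) and $\alpha\ge 0$ on $\{\lambda<0\}$; on $\{\lambda=0\}$ two-sided perturbations are admissible and deliver $0\le\alpha\le 1$ directly. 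Subadditivity (\ref{superadditive}) provides the domination needed to interchange derivative and expectation, and the integrability built into $\mathcal D'$ controls the stochastic exponentials. Finally, since $I$ and $I^{i,\alpha}$ satisfy the same SDE with the same initial condition, strong uniqueness (Theorem~\ref{thm::uniqueness}) gives $I^{i,\alpha}=I$, and by construction $I(T)=B$, which completes the proof.
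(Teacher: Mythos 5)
Your route is essentially the paper's: both adapt CK Theorem 9.1 by (i) forming the martingale $M(t)=\mathbb E[H^{(\lambda)}(T)B\mid\mathcal F_t]$, reading $\alpha$ off the martingale representation of $I:=M/H^{(\lambda)}$, and (ii) exploiting first-order optimality of $\lambda$ in (\ref{maincond}) to pin down the pointwise properties of $\alpha$. The paper leaves step (i) implicit (deferring to CK p.~782) and instead writes out the It\^{o} computation on $H^{(\lambda)}(t)I(t)(L_t+N_t)$ that converts the variational inequality into pointwise conditions; you make step (i) explicit but compress step (ii), and that is where two genuine gaps sit. The first concerns complementary slackness. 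Your candidate $I=M/H^{(\lambda)}$ satisfies the \emph{auxiliary} dynamics, with drift $r+\alpha\sigma\theta+\alpha\lambda-\delta(\lambda)$; it coincides with the original controlled process $I^{i,\alpha}$ only if $\alpha(t)\lambda(t)-\delta(\lambda(t))=0$ a.e. You correctly identify this as the needed identity, but you never derive it: your variational paragraph is aimed solely at the box constraint. Without it the conclusion is $I^{i,\alpha}_{\lambda}=B$ rather than $I^{i,\alpha}=B$, and Proposition 8.3 cannot be invoked. The paper obtains it by testing optimality of $\lambda$ in the two specific directions $\nu=0$ and $\rho=\lambda$, which produce the opposite inequalities $-\delta(\lambda)+\alpha\lambda\le 0$ and $-\delta(\lambda)+\alpha\lambda\ge 0$.

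The second gap is in your admissibility argument itself. Tying the sign of the perturbation $\rho$ to the sign of $\lambda$ yields only $\alpha\le 1$ on $\{\lambda>0\}$ and only $\alpha\ge 0$ on $\{\lambda<0\}$, which does not give $\alpha\in[0,1]$ a.e. The restriction is also unnecessary: $\mathcal D'$ carries no sign constraint, and the role of subadditivity (\ref{superadditive}) is precisely to bound $\delta(\lambda+\varepsilon\rho)-\delta(\lambda)\le\varepsilon\,\delta(\rho)$ for an \emph{arbitrary} bounded $\rho$, so the one-sided derivative of $\varepsilon\mapsto\mathbb E[H^{(\lambda+\varepsilon\rho)}(T)B]$ is controlled for perturbations of either sign everywhere. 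After localization the resulting pointwise inequality $\alpha(t)\rho-\delta(\rho)\le 0$ for all real $\rho$, specialized to $\rho=+1$ and $\rho=-1$, gives both $\alpha\le 1$ and $\alpha\ge 0$ on the whole time interval. With these two repairs your argument coincides with the paper's proof.
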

\begin{proof}[Sketch of Proof]
See CK p.782 for a definition of the stopping time $\tau_n$.
By (\ref{maincond}) and subadditivity of $\delta $ (\ref{superadditive}):
\begin{eqnarray*}
0 &\leq &\lim_{\varepsilon \downarrow 0}\sup \frac{1}{\varepsilon }%
\mathbb E[(H^{(\lambda )(T)}-H^{(\lambda+\varepsilon (\nu -\lambda ) )}(T))B]  \notag \\
&=&\lim_{\varepsilon \downarrow 0}\sup \frac{1}{\varepsilon }\mathbb E\Big[H^{(\lambda)}(T)B \Big(1-\exp\Big(\notag\\
&&~~~\int_{0}^{T\wedge\tau_n}\left( \delta (\lambda (t)+\varepsilon (\nu
(t)-\lambda (t)))-\delta (\lambda (t))\right) dt\notag \\
&&~~~\times\mathcal{E}\big(%
\int_{0}^{T\wedge\tau_n}( -\theta ^{(\lambda )}(t)+\theta ^{(\lambda +\varepsilon
(\nu -\lambda ))}(t)) dB_{2}^{(\lambda )}(t)\big)\Big)\Big)\Big]  \notag \\
&\leq &\lim_{\varepsilon \downarrow 0}\sup \mathbb E\Big[H^{(\lambda )}(T)B( L_T+N_T) \Big]  
\end{eqnarray*}%
where
\begin{eqnarray*}
&&\breve{\delta}^{(\nu)}(\lambda(t))=\left\{\begin{array}{ll}
    \delta(\lambda(t)) &  \nu=0\\
    -\delta(\nu(t)-\lambda(t)) & \text{otherwise}
\end{array}\right.\\
&&L_T = \int_{0}^{T\wedge\tau_n}\breve{\delta}^{(\nu)}(\lambda(t))dt\\
&& N_T = \int_{0}^{T\wedge\tau_n}\frac{\nu (t)-\lambda (t)}{\sigma }dB_{2}^{(\lambda
)}(t)
\end{eqnarray*}

By Ito's lemma.

\begin{eqnarray*}
&&d[ H^{(\lambda )}(t)I(t)( L_t+N_t)] = I(t)H^{(\lambda )}(t)d( L_t+N_t)\\
&&~~~+( L_t+N_t)H^{(\lambda )}(t) I(t)\alpha(t)\sigma dB_{2}^{(\lambda
)}(t)+I(t)H^{(\lambda )}(t)\alpha(t)(\nu(t)-\lambda(t))dt
\end{eqnarray*}
which implies
\begin{eqnarray*}
&&H^{(\lambda )}(T)I(T)( L_T+N_T) \\
&&= \int_{0}^{\tau_n}I(t)H^{(\lambda )}(t)\Big(\frac{\nu(t)-\lambda(t)}{\sigma}+( L_t+N_t)\sigma\alpha(t)\Big)dB_{2}^{(\lambda
)}(t)\\
&&~~~+\int_{0}^{\tau_n}H^{(\lambda )}(t) I(t)\Big(\alpha(t)(\nu(t)-\lambda(t))dt+dL_t\Big)
\end{eqnarray*}
Therefore,
\begin{eqnarray*}
0\leq\mathbb E[H^{(\lambda )}(T) B(L_T+N_T)]=\mathbb E\Big[\int_{0}^{\tau_n}H^{(\lambda )}(t) I(t)\Big(\alpha(t)(\nu(t)-\lambda(t))dt+dL_t\Big)\Big]
\end{eqnarray*}

It is easy to see that, for any $\rho \in \mathcal{D}'$, take $\nu = \lambda+\rho$:
\begin{equation}
-\delta (\rho (t))+\alpha (t)\rho (t)\geq 0  \label{bizarre}
\end{equation}
and, taking $\nu (t)=0$, we  obtain:%
\begin{equation*}
-\delta (\lambda (t))+\alpha (t)\lambda (t)\leq 0
\end{equation*}
which together with (\ref{bizarre})\ for $\rho =\lambda $ yields:
\begin{equation*}
-\delta (\lambda (t))+\alpha (t)\lambda (t)=0
\end{equation*}
\end{proof}

\paragraph{CK Section 10. Equivalent optimality conditions.}
The most important implication to prove is (D)$\Rightarrow $(B)$\Rightarrow $%
(A)\ in CK. It shows that the solution of the dual problem solves the
auxiliary problem, and that, moreover, it is feasible and optimal for the
original constrained problem. We make it more explicit here.
\begin{namedtheorem}[(Part of) CK 10.1]
\label{CK_thm_101}
Suppose that for every $\nu \in 
\mathcal{D}'$,
\begin{equation}
\mathbb E[\tilde{U}(\mathcal{Y}_{\lambda }(i)H^{(\lambda )}(T))]\leq \mathbb E[\tilde{U}(%
\mathcal{Y}_{\lambda }(i)H^{(\nu )}(T))]  \label{CondTh101}
\end{equation}
then there exists a control $\alpha _{\lambda }\in \lbrack 0,1]$ that is optimal for the constrained problem $V_{\lambda
}(i)=\mathbb E[U(I^{i,\alpha _{\lambda }}(T))]$ and such that
\begin{equation*}
V_{\lambda }(i)=V(i)
\end{equation*}
\end{namedtheorem}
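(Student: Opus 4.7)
The plan is to assemble the ingredients set up in CK Sections~7--9 in three steps. Define the candidate terminal value $B := I_2(\mathcal{Y}_\lambda(i) H^{(\lambda)}(T))$; by construction of $\mathcal{Y}_\lambda$ one has $\mathbb{E}[H^{(\lambda)}(T) B] = \mathcal{X}_\lambda(\mathcal{Y}_\lambda(i)) = i$. The first step is to deduce from the hypothesis (\ref{CondTh101}) that $\mathbb{E}[H^{(\nu)}(T) B] \leq i$ for every $\nu \in \mathcal{D}'$, which is precisely the feasibility condition that Theorem~CK~9.1 demands of $B$. Writing $\nu_\varepsilon = \lambda + \varepsilon(\nu - \lambda)$, condition (\ref{CondTh101}) forces the one-sided derivative at $\varepsilon = 0^+$ of $\varepsilon \mapsto \mathbb{E}[\tilde{U}(\mathcal{Y}_\lambda(i) H^{(\nu_\varepsilon)}(T))]$ to be nonnegative; computing this with $\tilde{U}'(y) = -I_2(y)$, the Girsanov representation relating $H^{(\nu_\varepsilon)}$ to $H^{(\lambda)}$, and the positive-homogeneity plus subadditivity bound $\delta(\nu_\varepsilon) - \delta(\lambda) \leq \varepsilon\,\delta(\nu - \lambda)$, the inequality rearranges---after localization by the stopping times $\tau_n$ from the proof of CK~9.1 and dominated convergence justified by $\nu \in \mathcal{D}'$---to $\mathbb{E}[H^{(\nu)}(T) B] \leq \mathbb{E}[H^{(\lambda)}(T) B] = i$.

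The second step is to invoke Theorem~CK~9.1 with this $B$; it produces a control $\alpha_\lambda \in \mathcal{A}'(i) \subset [0,1]$ with $I^{i,\alpha_\lambda}(T) = B$. The third step establishes optimality via the Fenchel--Legendre inequality $U(x) \leq \tilde{U}(y) + xy$ with $y = \mathcal{Y}_\lambda(i) H^{(\lambda)}(T)$. For any $\alpha \in \mathcal{A}'_\lambda(i)$, the pointwise bound $-\delta(\lambda) + \alpha\lambda \leq 0$ on $[0,1]$ (checked separately for $\lambda \leq 0$ and $\lambda > 0$) combined with Ito's lemma makes $H^{(\lambda)}(t) I^{i,\alpha}(t)$ a supermartingale, giving $\mathbb{E}[H^{(\lambda)}(T) I^{i,\alpha}(T)] \leq i$ and hence
$$\mathbb{E}[U(I^{i,\alpha}(T))] \leq \mathbb{E}[\tilde{U}(\mathcal{Y}_\lambda(i) H^{(\lambda)}(T))] + \mathcal{Y}_\lambda(i)\, i.$$
Evaluated at $\alpha = \alpha_\lambda$ both inequalities become equalities---the Fenchel bound by the definition of $I_2$, and the budget inequality because $\mathbb{E}[H^{(\lambda)}(T) B] = i$---so $\alpha_\lambda$ attains $V_\lambda(i)$; the inclusion $\mathcal{A}'(i) \subset \mathcal{A}'_\lambda(i)$ then forces $V(i) = V_\lambda(i)$ and shows $\alpha_\lambda$ is optimal for the original constrained problem.

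The crux is the first step: because $\delta$ is non-smooth at zero, the directional derivative must be handled in a one-sided Gateaux sense bounded above by the subadditive majorant, and the interchange of $\lim_{\varepsilon \downarrow 0}$ with expectation must be justified by the localization $\tau_n$ together with the integrability built into $\mathcal{D}'$. The sign conventions of our sup problem differ from the CK wealth-maximization setup, but (\ref{CondTh101}) already anticipates the correct orientation, so Steps~2 and~3 reduce to a direct transcription of the CK argument with our $I_2$ and $\tilde{U}$.
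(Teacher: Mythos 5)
Your overall architecture is the same as the paper's: differentiate the hypothesis along $\nu_\varepsilon=\lambda+\varepsilon(\nu-\lambda)$ using $\tilde U'(y)=-I_2(y)$ to obtain the feasibility condition of Theorem CK 9.1 for $B=I_2(\mathcal Y_\lambda(i)H^{(\lambda)}(T))$, invoke that theorem to manufacture $\alpha_\lambda$, and then conclude optimality. The only structural difference is in the last step: the paper verifies the complementary slackness $-\delta(\lambda)+\alpha_\lambda\lambda=0$ (obtained inside the proof of CK 9.1) and cites Proposition 8.3, whereas you unpack that proposition into the explicit Fenchel--Legendre chain. That unpacking is a legitimate and arguably more self-contained route.

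However, your Step 3 contains a concrete sign error under this paper's conventions. Here $\gamma<0$, $U'(i)=-i^{-\gamma}<0$, and $I_2$ is defined on $y\le 0$, so $\mathcal Y_\lambda(i)\le 0$; moreover the paper explicitly notes (Section ``Convex sets and their support functions'') that the key inequality is \emph{reversed} relative to CK. A direct It\^o computation gives the deflated original process the drift
\begin{equation*}
\frac{d\big(H^{(\lambda)}(t)I^{i,\alpha}(t)\big)}{H^{(\lambda)}(t)I^{i,\alpha}(t)}
=\big(\delta(\lambda(t))-\alpha(t)\lambda(t)\big)\,dt+(\cdots)\,dB_2(t),
\end{equation*}
and since $\alpha\lambda-\delta(\lambda)\le 0$ on $[0,1]$ the drift is \emph{nonnegative}: $H^{(\lambda)}I^{i,\alpha}$ is a (local) submartingale and $\mathbb E[H^{(\lambda)}(T)I^{i,\alpha}(T)]\ge i$, not $\le i$ as you assert. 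With your stated inequality $\mathbb E[H^{(\lambda)}(T)I^{i,\alpha}(T)]\le i$ and $\mathcal Y_\lambda(i)\le 0$, the step ``hence $\mathbb E[U]\le \mathbb E[\tilde U]+\mathcal Y_\lambda(i)\,i$'' does not follow, because multiplying by the negative number $\mathcal Y_\lambda(i)$ reverses the inequality. The chain closes only after flipping both claims: submartingale, budget bound $\ge i$, and then $\mathcal Y_\lambda(i)\mathbb E[H^{(\lambda)}(T)I^{i,\alpha}(T)]\le \mathcal Y_\lambda(i)\,i$. This is a repairable slip rather than a wrong approach, but as written the displayed inequality in your Step 3 is a non sequitur; the same caution applies in Step 1, where the sign of $\mathcal Y_\lambda(i)$ must be tracked when dividing it out of the directional-derivative inequality.
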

\begin{proof}
\begin{equation*}
\mathbb E[\tilde{U}(\mathcal{Y}_{\lambda }(i)H^{(\lambda )}(T))]\leq \mathbb E[\tilde{U}(%
\mathcal{Y}_{\lambda }(i)H^{(\lambda +\varepsilon (\nu -\lambda ))}(T))]
\end{equation*}
Since $\tilde{U}^{\prime }(y)=-I_{2}(y)$,
\begin{eqnarray*}
0 &\leq &\lim_{\varepsilon \downarrow 0}\sup \frac{1}{\varepsilon }\mathbb E[\tilde{U%
}(\mathcal{Y}_{\lambda }(i)H^{(\lambda+\varepsilon (\nu -\lambda ) )}(T))-\tilde{U}(\mathcal{Y}_{\lambda}(i)H^{(\lambda )}(T))] \\
&=&\mathcal{Y}_{\lambda
}(i)\lim_{\varepsilon \downarrow 0}\sup \frac{1}{\varepsilon }\mathbb E[I_{2}(%
\mathcal{Y}_{\lambda }(i)H^{(\lambda )}(T))(H^{(\lambda )}(T)-H^{(\lambda +\varepsilon (\nu
-\lambda ))}(T))
\end{eqnarray*}
By theorem 9.1 there exists a control $\alpha _{\lambda }\in \mathcal{A}%
_{\lambda }^{\prime }(i)$ such that:%
\begin{equation*}
I^{i,\alpha _{\lambda }}(T)=I_{2}(\mathcal{Y}_{\lambda }(i)H_{\lambda }(T))
\end{equation*}%
Clearly $\alpha _{\lambda }$ is optimal for the constrained problem, and 
\begin{equation*}
-\delta (\lambda )+\alpha _{\lambda }(t)\lambda (t)=0
\end{equation*}
Thus by proposition 8.3, $\alpha _{\lambda }$ is optimal for the constrained
problem.
\end{proof}
\paragraph{CK Section 12. A dual problem.}

Define:%
\begin{equation*}
\hat{V}(y)=\inf_{\nu \in \mathcal{D}'}\mathbb E[\tilde{U}(yH_{\nu }(T))]
\end{equation*}
In our case,%
\begin{equation*}
\tilde{U}(y)=\max_{x>0}\big[-\frac{x^{1-\gamma }}{1-\gamma }-xy\big]
\end{equation*}
Thus%
\begin{equation*}
y=U^{\prime }(x)=-x^{-\gamma }\implies
I_{2}(y)=(-y)^{-1/\gamma }
\end{equation*}
Let $\rho =(1-\gamma )/\gamma $. Then:
\begin{equation*}
\tilde{U}(y)=-(-y)^{-\rho}/\rho
\end{equation*}%
Typically, $\gamma =-1$, so that:
\begin{equation*}
\tilde{U}(y)=y^{2}/2
\end{equation*}

The main problem in condition (\ref{CondTh101}) is to find the optimal
process $H^{(\lambda )}$ (across all $H^{(\nu )}$) but it depends on $y$
which depends on $\lambda $. Thus the dual must be fixed for a fixed but
arbitrary real number $y$. The objective has the form
\begin{equation*}
\mathbb E[\tilde{U}(yH^{(\nu )}(T))]=\mathbb E[U(I_2(yH^{(\nu )}(T)))-yH^{(\nu
)}(T)I_{2}(yH^{(\nu )}(T))]
\end{equation*}

The right handside of the equation (see \citet[ p.134]{korn1997optimal}) is the maximum of the
function $h(B,y):=L(B,y)$ for all non-negative $\mathcal{F}_{T}$ measurable $%
B$ with $\mathbb E[H^{(\nu )}(T)B]\leq i$. Thus a minimization over all positive
numbers $y$ of $h(B,y)$ would yield the optimal utility of the unconstrained
problem. We could thus first minimize $\mathbb E[\tilde{U}(yH^{(\nu )}(T))]$ in $y$,
and then minimize over $\nu $. However, the main idea is to first minimize
over $\mu $, and then minimize over $y$, hoping that the two can be
interchanged.
\begin{namedproposition}[CK 12.1]
Suppose that for any $y$ there exists $\lambda _{y}$ such
that $\hat{V}(y)=\mathbb E[\tilde{U}(yH^{_{(\lambda _{y})}}(T))]$. Then there exist
an $\alpha \in \mathcal{A}^{\prime }(i)$ with $i=\mathcal{X}_{\lambda
_{y}(y)}$ which is optimal for the primal problem, and we have:
\begin{equation*}
\hat{V}(y)=\sup_{i}[V(i)-iy]
\end{equation*}
\end{namedproposition}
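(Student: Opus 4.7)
The plan is to leverage the previously established Theorem CK 10.1 and the representation from Theorem CK 9.1 to construct the primal-optimal control from the dual minimizer $\lambda_y$, and then to verify the conjugacy identity via a pair of matching inequalities. First I would observe that minimality of $\lambda_y$ in the dual objective is exactly the hypothesis of Theorem CK 10.1 once we set its parameter equal to $y$ and choose $i=\mathcal{X}_{\lambda_y}(y)$: by the defining inversion $\mathcal{Y}_{\lambda_y}(\mathcal{X}_{\lambda_y}(y))=y$, condition (\ref{CondTh101}) becomes precisely $\mathbb{E}[\tilde U(yH^{(\lambda_y)}(T))]\le \mathbb{E}[\tilde U(yH^{(\nu)}(T))]$ for every $\nu\in\mathcal D'$. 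Applying Theorem CK 10.1 then produces an admissible $\alpha_{\lambda_y}\in \mathcal{A}'(i)$ that is optimal for the constrained problem at initial condition $i$, together with the terminal identity $I^{i,\alpha_{\lambda_y}}(T)=I_2(yH^{(\lambda_y)}(T))$ inherited from Theorem CK 9.1.

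Next I would exploit the Legendre--Fenchel equality $\tilde U(z)=U(I_2(z))-zI_2(z)$ on this attainer. Substituting $z=yH^{(\lambda_y)}(T)$ and taking expectations gives
\[
\hat V(y) \;=\; \mathbb{E}\!\left[U\!\left(I^{i,\alpha_{\lambda_y}}(T)\right)\right] \;-\; y\,\mathbb{E}\!\left[H^{(\lambda_y)}(T)\,I^{i,\alpha_{\lambda_y}}(T)\right] \;=\; V(i) - iy,
\]
where in the last step $\mathbb{E}[H^{(\lambda_y)}(T)I_2(yH^{(\lambda_y)}(T))]=\mathcal{X}_{\lambda_y}(y)=i$ is used. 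This shows the supremum in $\sup_{i'}[V(i')-i'y]$ is attained at our particular $i$.

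For the reverse inequality uniformly in $i'$, I would apply the pointwise Fenchel bound $U(x)\le \tilde U(z)+xz$ with $x=I^{i',\alpha'}(T)$ and $z=yH^{(\lambda_y)}(T)$ for arbitrary $\alpha'\in\mathcal{A}'(i')$; taking expectations yields
\[
V(i') \;\le\; \hat V(y) \;+\; y\,\mathbb{E}\!\left[H^{(\lambda_y)}(T)\,I^{i',\alpha'}(T)\right].
\]
Combining this with the attained case closes the identity $\hat V(y)=\sup_{i'}[V(i')-i'y]$, provided the budget inequality $\mathbb{E}[H^{(\lambda_y)}(T)I^{i',\alpha'}(T)]\le i'$ holds for every admissible $\alpha'\in\mathcal A'(i')$.

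The main obstacle will be establishing precisely that budget inequality. I expect to obtain it by an It\^o computation of $d(H^{(\lambda_y)}I^{i',\alpha'})$: the drift of the product is proportional to $\alpha'(t)\lambda_y(t)-\delta(\lambda_y(t))$, which is non-positive for $\alpha'\in[0,1]$ by the sign observation already used in Section 8, so the product is a nonnegative local supermartingale starting at $i'$. The integrability needed to replace ``local'' by ``true'' would be handled via the same stopping-time localization $\tau_n$ invoked in the sketch of CK Theorem 9.1, followed by Fatou's lemma as $n\to\infty$. Once this is in place, the primal optimality of $\alpha_{\lambda_y}$ is already provided by Theorem CK 10.1, and the conjugacy identity follows by combining the equality and inequality just derived.
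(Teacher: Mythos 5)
Your first step is exactly the paper's (very short) proof: write $\lambda$ for $\lambda_{\mathcal{Y}_\lambda(i)}$ so that dual minimality becomes condition (\ref{CondTh101}), and invoke Theorem CK 10.1 to produce the optimal $\alpha_{\lambda_y}\in\mathcal{A}'(i)$ with terminal value $I_2(yH^{(\lambda_y)}(T))$. The paper stops there; you go on to verify the conjugacy identity, and your attained-case computation $\hat V(y)=V(i)-iy$ via the Fenchel equality and $\mathcal{X}_{\lambda_y}(y)=i$ is correct.

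The gap is in the reverse inequality, and it is a sign problem specific to this paper's conventions rather than to standard CK. Here the utility is decreasing, $U'(i)=-i^{-\gamma}<0$, so the dual variable lives on $y\le 0$ (the paper defines $I_2$ and $\tilde U'$ only there). Computing the drift of the product with $dH^{(\lambda)}/H^{(\lambda)}=(-r+\delta(\lambda))dt-\theta^{(\lambda)}dB_2$ and $dI/I=(r+\alpha'\sigma\theta)dt+\alpha'\sigma dB_2$ gives
\begin{equation*}
d\big(H^{(\lambda)}I\big)=H^{(\lambda)}I\big(\delta(\lambda)-\alpha'\lambda\big)dt+(\ldots)\,dB_2,
\end{equation*}
and since $\alpha'\lambda-\delta(\lambda)\le 0$ for $\alpha'\in[0,1]$, the drift is \emph{nonnegative}: $H^{(\lambda)}I$ is a local submartingale, not a supermartingale, consistent with the paper's reversed relation $I^{i,\alpha}_\nu\le I^{i,\alpha}$ and the reversed inequality in its Theorem CK 9.1. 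Correspondingly, the budget inequality that is both true and needed is $\mathbb E[H^{(\lambda_y)}(T)I^{i',\alpha'}(T)]\ge i'$: only then does multiplication by $y\le 0$ give $y\big(\mathbb E[H^{(\lambda_y)}(T)I^{i',\alpha'}(T)]-i'\big)\le 0$ and hence $V(i')-i'y\le\hat V(y)$. As written, your supermartingale claim is false, and your bound $\mathbb E[H^{(\lambda_y)}(T)I^{i',\alpha'}(T)]\le i'$, once multiplied by the negative $y$, points the wrong way and does not close the duality. The fix is to flip both signs, and to replace the Fatou step (which preserves $\le$ under localization, not $\ge$) by a uniform-integrability or monotone-convergence argument appropriate for the submartingale direction, e.g.\ using $0\le I\le 1$ together with integrability of $H^{(\lambda_y)}(T)$ built into $\mathcal{D}'$.
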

\begin{proof}
Write $\lambda $ for $\lambda _{\mathcal{Y}_{\lambda }(i)}$. Then
\begin{equation*}
\mathbb E[\tilde{U}(\mathcal{Y}_{\lambda }(i)H^{(\lambda )}(T))]\leq \mathbb E[\tilde{U}(%
\mathcal{Y}_{\lambda }(i)H^{(\nu )}(T))]
\end{equation*}
and we conclude by CK Theorem 10.1.
\end{proof}

\paragraph{CK Section 15. Deterministic coefficients and feedback formulae.}

Define:
\begin{equation*}
Q(y,t)=\mathbb E[\tilde{U}(yH^{(\nu )}(T))|yH^{(\nu )}(t)=y]
\end{equation*}
Recall
\begin{eqnarray*}
\frac{dH^{(\nu )}}{H^{(\nu )}} =(-r+\delta (\nu ))dt-(\theta+\nu/\sigma )dB_{2}
\end{eqnarray*}
The HJB equation is:%
\begin{eqnarray*}
&&\min_{\nu }\frac{1}{2}y^{2}(\theta+\nu/\sigma )^{2}Q_{yy}+  
y(-r+\delta (\nu))Q_{y}+Q_{t}=0   \\
&&Q(T,y)=\tilde{U}(y)  =-\frac{(-y)^{-\rho}}{\rho}
\end{eqnarray*}
Again, with $\rho=(1-\gamma)/\gamma<0$. We choose
\begin{equation*}
Q(y,t)=-\frac{1}{\rho }(-y)^{-\rho }v(t)  \label{ForYujia}
\end{equation*}
Thus
\begin{eqnarray*}
&&\frac{1}{2}y^{2}(\theta+\nu/\sigma )^{2}Q_{yy}+  
y(-r+\delta (\nu))Q_{y}\\
&&~~~=-\frac{1}{2}(\rho+1)(-y)^{-\rho}v(t)(\theta+\nu/\sigma)^2+(-r+\delta(\nu))(-y)^{-\rho}v(t)
\end{eqnarray*}
Dividing by $(-y)^{-\rho}v(t)$, the problem becomes:
\begin{equation}
\label{argmin}
\underset{\nu }{\operatorname{argmin}}-\frac{1+\rho }{2}(\theta+\nu/\sigma )^{2}+\delta (\nu)
\end{equation}
Recall that if $\nu$ is positive, then $\delta (\nu)=\nu$ thus we solve
(\ref{argmin}) and obtain
\begin{equation*}
 \nu=\frac{\sigma^2}{1+\rho }+r-b=-\sigma^2|\gamma|+r-b
\end{equation*}%
since $1+\rho=1/\gamma$ and $\gamma$ is negative. If $\nu$ is negative, then $\delta (\nu)=0$, thus $\nu = r-b$.

From (\ref{alpha_solution}), the solution is 
\begin{equation*}
    \alpha(s) =\frac{r-b-\nu}{\sigma^2|\gamma|}=\min \Big(1,\max \Big(0,\frac{r-b}{ \sigma^{2}|\gamma|}\Big)\Big)
\end{equation*}
Suppose $ \mu_0=\mu_1$ and treatment is better than no treatment $\bar{k}_1>K_0$. Thus
$r-b=\bar{k}_1-K_0$ is positive. Thus
\begin{equation*}
    \alpha(s) =\min \Big(1,\max \Big(0,\frac{\bar{k}_1-K_0}{ \sigma^{2}|\gamma|}\Big)\Big)
\end{equation*}
\section{Explicit Formula of \texorpdfstring{$A_{3}(\tau ,\gamma ) $}{TEXT} in (\ref{A3})}
\label{app::explicit_A3}
\begin{eqnarray*}
&&A_{3}(\tau ,\gamma ) =\int_{0}^{\tau }\left( \frac{\sigma _{x}^{2}}{2\gamma 
}+\lambda _{x}\bar{X}\right) A_{2}^{2}(s,\gamma )+\frac{\sigma _{x}^{2}}{2}%
A_{1}(s,\gamma )+(\gamma -1)\mu ds \\
&&= \left(\frac{\sigma _{x}^{2}}{2\gamma }+\lambda _{x}\bar{X}\right)\Bigg(\frac{2\lambda _{x}\bar{X}b_{2}(\gamma)A_{2}(\tau ,\gamma)}{\theta^3(\gamma)b_{3}(\gamma)}+\frac{2\bar{X}^2\lambda_x^2}{\theta^3(\gamma)}\Bigg(-\frac{A_{1}(\tau,\gamma )}{b_{3}(\gamma)}\notag\\
&&+\frac{8b_{1}^2(\gamma)\tau\log\left(\frac{2\theta(\gamma)-\left(\theta(\gamma)+b_{2}(\gamma)\right)\left(1-e^{-\theta(\gamma)\tau}\right)}{2\theta(\gamma)}\right) }{(b_{2}(\gamma)-\theta(\gamma))\sqrt{b_{1}(\gamma)b_{3}(\gamma)}}+\frac{b_{2}(\gamma)(\theta(\gamma)-2\sqrt{b_{1}(\gamma)b_{3}(\gamma)})}{\theta(\gamma)b_{3}^2(\gamma)} \notag\\ &&\times\log\left(\frac{b_{2}(\gamma)-2\sqrt{b_{1}(\gamma)b_{3}(\gamma)}}{\theta(\gamma)}\left(\frac{2b_{2}(\gamma)+4\sqrt{b_{1}(\gamma)b_{3}(\gamma)}e^{-\theta(\gamma)\tau/2}}{2\theta(\gamma)-\left(b_{2}(\gamma)+\theta(\gamma)\right)\left(1-e^{-\theta(\gamma)\tau}\right)}\right.\right.\notag\\
&&\left.\left.-\frac{(b_{2}(\gamma)+\theta(\gamma))A_{1}(\tau,\gamma)}{2b_{1}(\gamma)}\right)\right)+\frac{4b_{1}^2(\gamma)\tau}{(b_{2}(\gamma)+\theta(\gamma))^2}\Bigg)\Bigg) \notag\\
&& +\frac{\sigma _{x}^{2}}{2}\left(\frac{1}{b_{3}(\gamma)}\log\left(\frac{2\theta(\gamma) e^{-\theta(\gamma)\tau}}{2\theta(\gamma)-\left(b_{2}(\gamma)+\theta(\gamma)\right)\left(1-e^{-\theta(\gamma)\tau}\right)}\right)-\frac{2b_{1}(\gamma)\tau}{b_{2}(\gamma)+\theta(\gamma)}\right)\notag\\
&&+(\gamma-1)\mu\tau
\end{eqnarray*}
\section{Proof of Theorem \ref{thm::ou_moderate_prob}}
\label{app::proof_thm3}
We following the proof of Proposition 2 in \cite{gatto2021optimal}. Recall the equations (58) (59) in \cite{gatto2021optimal} and following same notations:
\begin{eqnarray}
&&\Big( \frac{\partial }{\partial t}+L_{1}\Big) f_{1} =0  \label{first_eq}
\\
&&\Big(\frac{\partial }{\partial t}+L_{1}\Big)f_{2} =-L_{2}f_{1}  \label{second_eq}
\end{eqnarray}
where $L_1, L_2$ are equations (53) (54) in \cite{gatto2021optimal}.
\paragraph{Solution of (\ref{first_eq})} 
We postulate that:
\begin{equation*}
f_{1}(Z,X,t)=Z^{1/\gamma }H_{1}(X,T-t) 
\end{equation*}
Substitution in (\ref{first_eq}) shows that $H_{1}$ solves:%
\begin{eqnarray}
&&\Big( \frac{\partial }{\partial t}+L^{\gamma }\Big) H_{1} =0
\label{dfdtplusL} \\
&&H_{1}(X,0) =1  \notag
\end{eqnarray}%
where the operator $L^{\gamma }$ is defined by: 
\begin{eqnarray*}
L^{\gamma }H &\equiv &\frac{1}{2}\sigma _{x}^{2}\frac{\partial ^{2}H}{%
\partial X^{2}}+\Big( \Big(\frac{\gamma -1}{\gamma }\sigma _{x}-\lambda
_{x}\Big)X+\lambda _{x}\bar{X}\Big) \frac{\partial H}{\partial X} \\
&&+\Big( X^{2}\Big( \frac{1}{2}\frac{1}{\gamma }\big(\frac{1}{\gamma }%
-1\big)\Big) +\mu \big(1-\frac{1}{\gamma }\big)\Big) H
\end{eqnarray*}
Using the Ansatz (\ref{KernelH1}), we can rewrite the LHS\ of (\ref{dfdtplusL}%
) into:
\begin{equation*}
(C_{1}(t)X^{2}+C_{2}(t)X+C_{3}(t))H_1/\gamma =0
\end{equation*}
Clearly all terms $C_{1},C_{2},C_{3}$ must be identically zero. Thus:
\begin{eqnarray*}
&&\frac{dA_{1}(t,\gamma)}{dt}=\frac{\sigma _{x}^{2}}{\gamma }A_{1}^{2}(t,\gamma)+2\big(\frac{%
\gamma -1}{\gamma }\sigma _{x}-\lambda _{x}\big)A_{1}(t,\gamma)+\frac{1-\gamma }{\gamma }\\
&&\frac{dA_{2}(t,\gamma)}{dt}=\frac{\sigma _{x}^{2}A_{1}(t,\gamma)}{\gamma }A_{2}(t,\gamma)+\big( 
\frac{\gamma -1}{\gamma }\sigma _{x}-\lambda _{x}\big) A_{2}(t,\gamma)+\lambda _{x}%
\bar{X}A_{1}(t,\gamma)\\
&&\frac{dA_{3}(t,\gamma)}{dt}=\frac{\sigma _{x}^{2}}{2}\big( A_{1}(t,\gamma)+\frac{A_{2}^{2}(t,\gamma)%
}{\gamma }\big) +\lambda _{x}\bar{X}A_{2}(t,\gamma)-\mu (1-\gamma )
\end{eqnarray*}
which admit the solutions (\ref{A1}),(\ref{A2}),(\ref{A3}).

\paragraph{Solution of (\ref{second_eq})}
The second equation can be rewritten
\begin{eqnarray}
\Big(\frac{\partial }{\partial t}+L_{1}\Big)f_{2} =\frac{1}{2}\frac{\beta ^{2}}{\gamma \sigma _{S}^{2}}Z^{2/\gamma
}SH_{1}(X,T-t)^{2} \label{diffequ_secondmodel}
\end{eqnarray}
We try the Ansatz:%
\begin{equation}
f_{2}(Z(t),X(t),t)=Z(t)^{2/\gamma }S(t)g(X(t),t)  \label{f2}
\end{equation}
Thus
\begin{eqnarray*}
&&\Big( \frac{\partial }{\partial t}+L^{\gamma /2}\Big) g(X,t) =\frac{1}{%
2}\frac{\beta ^{2}}{\sigma _{S}^{2}\gamma}H_{1}(X,T-t)^{2} \\
&&g(X,T) =0
\end{eqnarray*}
We use Lemma to obtain the $g(X,t)$ in (\ref{func_g}).

The optimal policy is:
\begin{eqnarray*}
\alpha ^{\ast }=\frac{1}{\sigma F}\left( \frac{\partial F}{\partial Z}XZ-%
\frac{\partial F}{\partial X}\sigma _{x}\right)  
=\alpha_0+\varepsilon\alpha_1+\mathcal O(\varepsilon ^{2}) 
\end{eqnarray*}
where
\begin{eqnarray*}
\alpha_0 &=& \frac{\partial f_1}{\partial Z}\frac{XZ}{\sigma f_1}-\frac{\partial f_1}{\partial X}\frac{\sigma_x}{\sigma f_1}\\
&=&\frac{X(t)}{\gamma\sigma}-\frac{\sigma_x}{\gamma\sigma}\left(A_{1}(T-t,\gamma)X(t)+A_{2}(T-t,\gamma)\right)
\end{eqnarray*}
\begin{eqnarray*}
\alpha_1 &=& \frac{XZ}{\sigma f_1}\left(\frac{\partial f_2}{\partial Z}-\frac{\partial f_1}{\partial Z}\frac{f_2}{f_1}\right)-\frac{\sigma_x}{\sigma f_1}\left(\frac{\partial f_2}{\partial X}-\frac{\partial f_1}{\partial X}\frac{f_2}{f_1}\right)\\
&=&\frac{Z^{1/\gamma}(t)S(t)}{H_1(X, T-t)\sigma}\left(\frac{g(X(t),t)X(t)}{\gamma}-\sigma_x\frac{\partial g}{\partial X}\right.\\
&&\left.+\sigma_x\frac{g(X(t),t)}{\gamma}\left(A_{1}(T-t,\gamma)X(t)+A_{2}(T-t,\gamma)\right)\right)
\end{eqnarray*}
\begin{lemma}
Let $u(x,t)=\frac{1}{2}\frac{\beta ^{2}}{\sigma _{S}^{2}\gamma}%
H_{1}(x,T-t)^{2}$. The solution to
\begin{eqnarray}
&&\frac{\partial g(x,t)}{\partial t}+L^{\gamma /2}g(x,t) =u(x,t)
\label{Tosolve} \\
&&g(x,T) =0  \nonumber
\end{eqnarray}
is in (\ref{func_g}).
\end{lemma}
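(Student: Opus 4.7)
The plan is to combine a Feynman-Kac representation with a change of measure generated by $H_{2}$, reducing the problem to a closed-form one-dimensional Gaussian integral. Recognize $L^{\gamma/2}$ as the infinitesimal generator of an Ornstein-Uhlenbeck diffusion
$$dY(s)=\Big(\big(\tfrac{\gamma/2-1}{\gamma/2}\sigma_{x}-\lambda_{x}\big)Y(s)+\lambda_{x}\bar X\Big)ds+\sigma_{x}dW(s)$$
together with a quadratic zero-order potential $c(y):=\tfrac{1}{\gamma}\big(\tfrac{2}{\gamma}-1\big)y^{2}+\mu(1-\tfrac{2}{\gamma})$. Applying It\^o's formula to $s\mapsto g(Y(s),s)\exp(\int_{t}^{s}c(Y(r))dr)$ on $[t,T]$, using $g(\cdot,T)=0$, and Fubini gives
$$g(X,t)=-\int_{t}^{T}\mathbb{E}\Big[\exp\Big(\int_{t}^{\tau}c(Y(r))dr\Big)u(Y(\tau),\tau)\,\Big|\,Y(t)=X\Big]d\tau.$$

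Next, I extract the factor $H_{2}(X,\tau-t)$ via an $H_{2}$-based change of measure. The Riccati-type ODEs for $A_{1},A_{2},A_{3}$ derived in the proof of Theorem \ref{thm::ou_moderate_prob} continue to hold with $\gamma$ replaced by $\gamma/2$, so $H_{2}(\cdot,\tau-s)$ solves $(\partial_{s}+L^{\gamma/2})H_{2}=0$ with $H_{2}(\cdot,0)=1$. Consequently
$$N_{s}:=\frac{H_{2}(Y(s),\tau-s)}{H_{2}(X,\tau-t)}\exp\Big(\int_{t}^{s}c(Y(r))dr\Big),\quad s\in[t,\tau],$$
is a positive $\mathcal F_{s}$-martingale with $N_{t}=1$. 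Taking $d\mathbb Q_{\tau}/d\mathbb P|_{\mathcal F_{\tau}}=N_{\tau}$ yields
$$\mathbb{E}\Big[\exp\Big(\int_{t}^{\tau}c\,dr\Big)u(Y(\tau),\tau)\Big]=H_{2}(X,\tau-t)\,\mathbb{E}^{\mathbb Q_{\tau}}[u(Y(\tau),\tau)],$$
which accounts for the $H_{2}(X,\tau-t)$ factor in (\ref{func_g}).

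Under $\mathbb Q_{\tau}$, Girsanov adjusts the drift of $Y$ by $\sigma_{x}^{2}\partial_{y}\log H_{2}(Y(s),\tau-s)=(2\sigma_{x}^{2}/\gamma)(A_{1}(\tau-s,\gamma/2)Y(s)+A_{2}(\tau-s,\gamma/2))$. The SDE stays linear-affine in $y$, so $Y(\tau)$ is Gaussian under $\mathbb Q_{\tau}$; solving the resulting time-inhomogeneous linear ODEs for the conditional mean and variance identifies them with $m_{Y}(\tau,X)$ and $V_{Y}(\tau,X)$, where $\tilde M(s,\tau)$ is the fundamental solution of the homogeneous linear part. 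The additional shift $A_{2}(T-\tau,\gamma)/A_{1}(T-\tau,\gamma)$ inside $m_{Y}$ will be produced in the next step when I complete the square against the non-centered $H_{1}^{2}$-quadratic.

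Finally, since $H_{1}(y,T-\tau)^{2}=\exp\big(\tfrac{2}{\gamma}[\tfrac{1}{2}A_{1}(T-\tau,\gamma)y^{2}+A_{2}(T-\tau,\gamma)y+A_{3}(T-\tau,\gamma)]\big)$, the inner expectation $\mathbb E^{\mathbb Q_{\tau}}[u(Y(\tau),\tau)]$ is a one-dimensional Gaussian integral $\mathbb E[\exp(aY^{2}/2+bY+c_{0})]$ against $\mathcal N(m_{Y},V_{Y})$, with $a=2A_{1}(T-\tau,\gamma)/\gamma$. Because $\gamma<0$ forces $A_{1}(T-\tau,\gamma)<0$, the admissibility $1-aV_{Y}>0$ holds, and completing the square yields exactly $1/\sqrt{1-2V_{Y}A_{1}/\gamma}$ times the exponential $(2/\gamma)A_{3}-A_{2}^{2}/(\gamma A_{1})+m_{Y}^{2}A_{1}/(\gamma-2V_{Y}A_{1})$ in (\ref{func_g}). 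The main obstacle is the bookkeeping here and in the previous paragraph: ensuring that the $\gamma$-appearances (from the terminal $H_{1}^{2}$) and the $\gamma/2$-appearances (from the generator $L^{\gamma/2}$ and the kernel $H_{2}$) interlock so that $\tilde M$, $m_{Y}$, and $V_{Y}$ emerge in exactly the combinations stated.
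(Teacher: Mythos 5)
Your route is the same one the paper takes: a Feynman--Kac representation of the inhomogeneous terminal-value problem, a change of measure in which $H_{2}$ plays the role of the discount-bond numeraire (the paper invokes Shreve's change-of-numeraire theorem where you build the density $N_{s}$ by hand), an affine/Gaussian law for $X(\tau)$ under the forward measure with mean $m_{Y}$ and variance $V_{Y}$ built from the fundamental solution $\tilde M$, and a final completion of the square against $H_{1}^{2}$. Two points, however, are not closed. First, your (correct) Feynman--Kac step yields $g(X,t)=-\int_{t}^{T}\mathbb E\big[\exp\big(\int_{t}^{\tau}c\big)u(Y(\tau),\tau)\big]d\tau$, so after the numeraire change you arrive at the \emph{negative} of the right-hand side of (\ref{func_g}); you assert agreement with (\ref{func_g}) without ever accounting for this minus sign. (The paper's own representation $g=\int_{t}^{T}Q(t,\tau)\,d\tau$ silently drops the same sign, whereas the analogous formula (\ref{g_i}) in the constant-rate appendix does carry it, so you should either locate where the sign is absorbed or flag the discrepancy explicitly rather than claiming the formulas coincide.) Second, your integrability argument is backwards: with $\gamma<0$ and $A_{1}(T-\tau,\gamma)<0$, the quadratic coefficient $a=2A_{1}/\gamma$ is \emph{positive}, so $1-aV_{Y}>0$ is precisely the case that is \emph{not} automatic; it is $A_{1}/\gamma\le 0$ that makes the Gaussian integral converge for free. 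The paper does not prove this condition either — it simply imposes the proviso $\gamma<2A_{1}(T-\tau,\gamma)V_{Y}(\tau,x)$ — and your proof needs to either establish that bound or state it as a hypothesis.
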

\begin{proof}[Sketch of Proof]
The solution $g(x,t)$ is the price of a variable-coupon bond in an affine
model. The building block is the solution of a zero-coupon bond in the
similar model.
Define $m(x)$ and $r(x)$ to be such that:
\begin{eqnarray*}
&&L^{\gamma /2}f(x,t)=\frac{1}{2}\sigma _{x}^{2}\frac{\partial ^{2}f(x,t)}{%
\partial x^{2}}+m(x)\frac{\partial f(x,t)}{\partial x}-r(x)f(x,t)\\
&&m(x) =\Big(\frac{\gamma /2-1}{\gamma /2}\sigma _{x}-\lambda _{x}\Big)x+\lambda _{x}%
\bar{X} \\
&&r(x) =-\Big(x^{2}\frac{1}{\gamma }\big(\frac{2}{\gamma }-1\big)+\mu \big(1-\frac{2}{\gamma 
}\big)\Big)
\end{eqnarray*}
Let $f(x,t)$ be the solution of: 
\begin{equation}
\frac{\partial f(x,t)}{\partial t}+\frac{1}{2}\sigma _{x}^{2}\frac{\partial
^{2}f(x,t)}{\partial x^{2}}+m(x)\frac{\partial f(x,t)}{\partial x}=r(x)f(x,t)
\label{Thisisit}
\end{equation}
Defining:
\begin{equation}
dX(t)=m(X)dt+\sigma _{x}dW(t)  \label{dX}
\end{equation}
we see that:
\begin{equation}
\frac{\partial f(x,t)}{\partial t}+\frac{1}{2}\sigma _{x}^{2}\frac{\partial
^{2}f(x,t)}{\partial x^{2}}+m(x)\frac{\partial f(x,t)}{\partial x}%
=\mathbb E[df(X,t)|X(t)=x]/dt  \label{dP}
\end{equation}
Thus (\ref{Thisisit}) can be rewritten:%
\[
\mathbb E[df(X(t),t)-r(X(t))f(X(t),t)dt|X(t)]=0
\]
Using the integrating factor $\exp (-\int_{0}^{t}r(X(s)ds)$, we have:
\[
\mathbb E[d(\exp (-\int_{0}^{t}r(X(s))ds)f(X(t),t))|X(t)]=0
\]
Under the boundary condition $f(X(T),T)=1$ the only possible solution is:
\[
f(x,t;T)=\mathbb E[\exp (-\int_{t}^{T}r(X(s))ds)|X(t)=x]
\]
Define $P(t,T)=f(X(t),t;T)=H_{2}(X(t),T-t)$  to be the
price of a discount bond with a maturity of $T$. Clearly:
\[
\frac{dP(t,T)}{P(t,T)}=r(X(t))dt+v(t,T)dW(t)
\]
where:%
\[
v(t,T)=\sigma _{x}\frac{\frac{\partial f}{\partial x}}{f}
\]
By Ito's lemma, and for the exact same reason as (\ref{dP}):
\[
\frac{\partial g(x,t)}{\partial t}+\frac{1}{2}\sigma _{x}^{2}\frac{\partial
^{2}g(x,t)}{\partial x^{2}}+m(x)\frac{\partial g(x,t)}{\partial x}%
=\mathbb E[dg(X,t)|X(t)=x]/dt
\]
The stochastic equivalent of (\ref{Tosolve}) is:
\[
\mathbb E[dg(X(t),t)-r(X(t))g(x,t)dt|X(t)]=\mathbb E[u(X(t),t)dt|X(t)]
\]
The solution is:%
\[
g(X(t),t)=\int_{\tau =t}^{T}Q(t,\tau )d\tau 
\]
where:%
\[
Q(t,\tau )=\mathbb E[\exp (-\int_{t}^{\tau }r(X(s))ds)u(X(\tau ),\tau )|X(t)]
\]
Clearly, for some volatility $\sigma _{Q}(t,\tau )$%
\[
\frac{dQ(t,\tau )}{Q(t,\tau )}=r(X(t))dt+\sigma _{Q}(t,\tau )dW(t)
\]%
We are now ready to define a change of numeraire. Let
\[
dW^{\tau }=dW-v(t,\tau )dt
\]
By Theorem 9.2.2. in \cite{shreve2004stochastic}, $Q(t,\tau )/P(t,\tau )$ is a $\mathbb{P}%
^{\tau }$-martingale, i.e.,%
\[
Q(t,\tau )=P(t,\tau )\mathbb E_{t}^{\tau }[u(X(\tau )]
\]
where
\begin{eqnarray*}
dX(t) &=&m(X)dt+\sigma _{x}dW(t) \\
&=&m(X)dt+\sigma _{x}(dW^{\tau }(t)+v(t,\tau )dt)
\end{eqnarray*}
From (\ref{KernelH1}),
\begin{eqnarray*}
u(X(t),t) =\frac{1}{2}\frac{\beta ^{2}}{\sigma _{{S}^{2}}\gamma}e^{\frac{2}{^{\gamma }}\left(%
\frac{A_{1}(T-t,\gamma)}{2}X(t)^{2}+A_{2}(T-t,\gamma)X(t)+A_{3}(T-t,\gamma)\right)} 
\end{eqnarray*}%
Let us now take: 
$$
P(t,T) =\exp \Big(\frac{2}{^{\gamma }}\Big(\frac{A_{1}(T-t,\gamma/2)}{2}%
X^{2}(t)+A_{2}(T-t,\gamma/2)X(t)+A_{3}(T-t,\gamma/2)\Big)\Big)
$$
Thus:%
\begin{equation*}
    v(t,\tau )=\frac{\sigma _{x}}{^{\gamma }}(A_{1}(\tau -t,\gamma/2)X(t)+A_{2}(\tau
-t,\gamma/2))
\end{equation*}
\begin{gather}
\label{Pof}
\begin{aligned}
    dX(t)
    =&\Big[ \big(\frac{\frac{\gamma}{2} -1}{\gamma /2}\sigma _{x}-\lambda _{x}+\frac{%
    \sigma _{x}^{2}}{^{\gamma }}A_{1}(\tau -t,\gamma/2)\big)X(t)+\lambda _{x}\bar{X}\\
    &+\frac{%
    \sigma _{x}^{2}}{^{\gamma }}A_{2}(\tau -t,\gamma/2)\Big] dt +\sigma _{x}dW^{\tau }(t)
\end{aligned}
\end{gather}

Thus $\mathbb E_{t}^{\tau }[u(X(\tau ))]$ when (\ref{Pof}) holds can be calculated
exactly the same way as $\mathbb E[u(X(\tau ))]$ when (\ref{dX}) holds. The
structure is also affine, and there will be a solution of the form:
\[
\mathbb E_{t}^{\tau }[u(X(\tau ),\tau )]=\frac{1}{2}\frac{\beta ^{2}}{\sigma
_{{S}^{2}}\gamma}\mathbb E_{t}^{\tau }\left[e^{\frac{2}{\gamma} \left(\frac{A_{1}(T-\tau,\gamma )}{2}X^{2}(\tau )+A_{2}(T-\tau,\gamma
)X(\tau )+A_{3}(T-\tau,\gamma )\right)}\right]
\]
To summarize, since $%
P(t,T)=H_{2}(X(t),T-t)$
\begin{gather}
\label{eqn::to_get_g}
\begin{aligned}
g(x,t) =&\int_{\tau =t}^{T}P(t,\tau )\mathbb E_{t}^{\tau }[u(X(\tau ),\tau )]d\tau 
\\
=&\int_{\tau =t}^{T}H_{2}(X,\tau -t)\frac{1}{2}\frac{\beta ^{2}}{\sigma
_{{S}^{2}}\gamma}
\\
&\mathbb E_{t}^{\tau }\left[e^{ \frac{2}{\gamma}\left(\frac{A_{1}(T-\tau,\gamma
)}{2}X^{2}(\tau )+A_{2}(T-\tau ,\gamma)X(\tau )+A_{3}(T-\tau,\gamma )\right)}\right]d\tau
\end{aligned}
\end{gather}

Let $\tilde{M}(t,\tau )$ as in (\ref{M_tilde}) and
\[
Y(\tau )=X(\tau )+\frac{A_{2}(T-\tau,\gamma )}{
A_{1}(T-\tau,\gamma )}
\]
Clearly:
\begin{eqnarray*}
&&\mathbb E^{\tau }[X(\tau )|X(t) =x]=x\tilde{M}(t,\tau )+\int_{s=t}^{\tau }\tilde{M}(s,\tau )(\lambda _{x}\bar{X}+\frac{\sigma _{x}^{2}}{^{\gamma }}%
A_{2}(\tau -s,\gamma))ds \\
&&\mathbb Var^{\tau }[X(\tau )|X(t) =x]=\sigma _{x}^{2}\int_{t}^{\tau }\tilde{M}^2(s,\tau )ds
\end{eqnarray*}
Thus we can calculate:%
\begin{eqnarray*}
&&m_{Y}(\tau ,x) =\mathbb E^{\tau }[Y(\tau )|X(t)=x]\notag\\
&&\hspace{1.5cm}=\mathbb E^{\tau
}[X(\tau )|X(t)=x]+\frac{A_{2}(T-\tau,\gamma )}{A_{1}(T-\tau,\gamma )} \label{m-in-g}\\
&&V_{Y}(\tau ,x) =\mathbb Var^{\tau }[Y(\tau )|X(t)=x]=\sigma
_{x}^{2}\int_{t}^{\tau }\tilde{M}^2(s,\tau )ds \label{v-in-g}
\end{eqnarray*}
We can further develop:
\begin{eqnarray*}
&&\mathbb E_{t}^{\tau }\Big[\exp \Big(\frac{2}{\gamma}\Big(\frac{A_{1}(T-\tau ,\gamma)}{2}X^{2}(\tau )+A_{2}(T-\tau ,\gamma)X(\tau
)+A_{3}(T-\tau,\gamma )\Big)\Big)\Big] \\
&&=\mathbb E_{t}^{\tau }\left[e^{\frac{2}{\gamma}A_{3}(T-\tau,\gamma  )+\frac{1}{\gamma}A_{1}(T-\tau,\gamma  )\left(X(\tau)+\frac{A_{2}(T-\tau,\gamma  )}{A_{1}(T-\tau,\gamma  )}\right)^2-\frac{A_{2}^2(T-\tau ,\gamma )}{\gamma A_{1}(T-\tau,\gamma )}}\right]\\
&&=\mathbb E_{t}^{\tau }\left[e^{\frac{2}{\gamma}A_{3}(T-\tau,\gamma  )-\frac{A_{2}^2(T-\tau ,\gamma )}{\gamma A_{1}(T-\tau ,\gamma )}}e^{\frac{A_{1}(T-\tau,\gamma  )}{\gamma}\left(X(\tau)+\frac{A_{2}(T-\tau ,\gamma )}{A_{1}(T-\tau,\gamma  )}\right)^2}\right]\\
&&=e^{\frac{2}{\gamma}A_{3}(T-\tau,\gamma  )-\frac{A_{2}^2(T-\tau ,\gamma )}{\gamma A_{1}(T-\tau,\gamma  )%
}}\frac{1}{\sqrt{2\pi V_{Y}(\tau ,x)}}\int e^{\frac{A_{1}(T-\tau,\gamma  )}{\gamma}y^2}e^{-%
\frac{(y-m_{Y}(\tau ,x))^{2}}{2V_{Y}(\tau ,x)}}dy\\
&&=e^{\frac{2}{\gamma}A_{3}(T-\tau,\gamma  )-\frac{A_{2}^2(T-\tau,\gamma)}{\gamma A_{1}(T-\tau,\gamma)%
}+\frac{m^2_{Y}(\tau ,x)A_{1}(T-\tau,\gamma )}{\gamma-2V_{Y}(\tau ,x)A_{1}(T-\tau,\gamma )}}\frac{1}{\sqrt{1-2 V_{Y}(\tau ,x)A_{1}(T-\tau,\gamma)/\gamma}}
\end{eqnarray*}
providing $\gamma<2A_{1}(T-\tau ,\gamma )V_{Y}(\tau ,x)$. Thus equation (\ref{func_g}) follows from equation (\ref{eqn::to_get_g}).
\end{proof}

\section{Proof of Theorem \ref{thm::constant_moderate}}
\label{app::proof_thm4}
When $X(t)$ is a constant, equations (53) and (54) in \cite{gatto2021optimal} become
\begin{eqnarray*}
&&L_{1}F =
\frac{1}{2}Z^{2}\big(\frac{b-r}{\sigma}\big)^{2}\frac{\partial ^{2}F}{\partial Z^{2}}-\mu Z\frac{\partial F}{\partial Z}+\mu F  \notag \\
&&L_{2}F =-\frac{1}{2}\frac{\beta ^{2}}{\sigma _{S}^{2}}ZSF\frac{\partial F}{%
\partial Z} 
\end{eqnarray*}
Use the Ansatz $f_1(Z(t), t) = Z^{1/\gamma}(t)h_1(T-t)$ and insert in (\ref{first_eq_full_expansion}) shows that $h_1$ solves:
\begin{eqnarray}
\left( \frac{\partial }{\partial t}+L^{\gamma }\right) h_{1} =0~~~~~~~h_{1}(0) =1
\label{sim-dfdtplusL}
\end{eqnarray}%
where the operator $L^{\gamma }$ is defined by: 
\begin{eqnarray*}
L^{\gamma }H &\equiv &\Big( \big(\frac{b-r}{\sigma}\big)^{2}\Big( \frac{1}{2}\frac{1}{\gamma }\big(\frac{1}{\gamma }%
-1\big)\Big) +\mu \big(1-\frac{1}{\gamma }\big)\Big) H
\end{eqnarray*}
Using the Ansatz (\ref{Kernelh}), we can rewrite  (\ref{sim-dfdtplusL}%
) into:
\begin{equation*}
\Big(C_{1}(t)\big(\frac{b-r}{\sigma}\big)^{2}+C_{2}(t)\Big)h_1/\gamma =0
\end{equation*}
Clearly all terms $C_{1},C_{2}$ must be identically zero. Thus
\begin{equation*}
\frac{da_{1,1}t}{dt}=\frac{1-\gamma }{\gamma }~~~~~\frac{da_{1,2}t}{dt}=\mu (\gamma -1)
\end{equation*}
which admit the solutions (\ref{ai1}), (\ref{ai2}) at $i=1$.

Now use $f_i(Z(t), t) = Z^{2^{i-1}/\gamma}(t)S^{2^{i-1}-1}(t)g_i(t)$. We can rewrite (\ref{second_eq_full_expansion}) by 
\begin{eqnarray}
\left( \frac{\partial }{\partial t}+L^{\gamma /2^i}\right) g_{i+1}(t) =\frac{1}{%
2}\frac{\beta ^{2}2^{i-1}}{\sigma _{S}^{2}\gamma}g_{i}^{2}(t) ~~~~~~~g_{i+1}(T) =0\label{sim-g2-equ}
\end{eqnarray}
Let $u(t)=\frac{1}{%
2}\frac{\beta ^{2}2^{i-1}}{\sigma _{S}^{2}\gamma}g_{i}^{2}(t)$ and
\begin{eqnarray*}
r_i = \frac{2^i}{\gamma}\Big(\frac{1}{2}\big(\frac{b-r}{\sigma}\big)^{2}a_{i+1,1}+a_{i+1,2}\Big)
\end{eqnarray*}
Then 
\[
L^{\gamma /2^i}g_{i+1}(t)=r_ig_{i+1}(t)
\]
and the stochastic equivalent of (\ref{sim-g2-equ})\ is:
\begin{eqnarray*}
\frac{\partial g_{i+1}(t)}{\partial t}+r_ig_{i+1}(t)=u(t)~~~~~~~g_{i+1}(T) =0
\end{eqnarray*}
which admits
\begin{equation}
    g_{i+1}(t) = -\frac{1}{2}\frac{\beta ^{2}2^{i-1}}{\sigma _{S}^{2}\gamma}\frac{1}{h_{i+1}(t)}\int_{t}^Tg_i^2(s)h_{i+1}(s)ds\label{g_i}
\end{equation}
We have showed that $g_1=h_1(T-t)$. Here we also provide the $g_2$ and $g_3$ in the following:
\begin{eqnarray*}
g_{2}(t) &=& \frac{\beta ^{2}}{2\sigma _{S}^{2}}\frac{h_2(T-t)-h_1^2(T-t)}{\big(\frac{b-r}{\sigma}\big)^{2}(a_{1,1}-a_{2,1})+2(a_{1,2}-a_{2,2})}
\end{eqnarray*}
\begin{eqnarray*}
&&g_{3}(t) = -\frac{\beta ^{6}}{16\sigma _{S}^{6}}\left(\frac{1}{\big(\frac{b-r}{\sigma}\big)^{2}(a_{1,1}-a_{2,1})+2(a_{1,2}-a_{2,2})}\right)^2\frac{1}{h_3(t)}\\
&&\left(h_2^2(T)\frac{\frac{h_3(T)}{h_2^2(T)}-\frac{h_3(t)}{h_2^2(t)}}{\frac{a_{3,1}-a_{2,1}}{2}\big(\frac{b-r}{\sigma}\big)^{2}+a_{3,2}-a_{2,2}}\right.\left.+h_1^4(T)\frac{\frac{h_3(T)}{h_1^4(T)}-\frac{h_3(t)}{h_1^4(t)}}{\frac{a_{3,1}-a_{1,1}}{2}\big(\frac{b-r}{\sigma}\big)^{2}+a_{3,2}-a_{1,2}}\right.\\
&&\left.-2\frac{h_2(T)}{h_1^2(T)}\frac{\frac{h_3(T)h_1^2(T)}{h_2(T)}-\frac{h_3(t)h_1^2(t)}{h_2(t)}}{\frac{a_{3,1}-\frac{a_{2,1}-a_{1,1}}{2}}{2}\big(\frac{b-r}{\sigma}\big)^{2}+a_{3,2}-\frac{a_{2,2}-a_{1,2}}{2}}\right)
\end{eqnarray*}

Suppose we use the first two expansions, the optimal policy is given by:
\begin{eqnarray*}
\alpha ^{\ast }=\alpha_0+\varepsilon\alpha_1+\mathcal O(\varepsilon ^{2}) 
\end{eqnarray*}
where 
\begin{eqnarray*}
&&\alpha_0 = \frac{\partial f_1}{\partial Z}\frac{\frac{b-r}{\sigma}Z}{\sigma f_1}=\frac{\frac{b-r}{\sigma}}{\gamma\sigma}\\
&&\alpha_1 =\frac{\frac{b-r}{\sigma}Z}{\sigma f_1}\left(\frac{\partial f_2}{\partial Z}-\frac{\partial f_1}{\partial Z}\frac{f_2}{f_1}\right)=\frac{Z^{1/\gamma}(t)S(t)}{h_1(T-t)\sigma}\frac{g_2(t)\frac{b-r}{\sigma}}{\gamma}
\end{eqnarray*}

\end{appendices}

\bibliographystyle{plainnat}
\bibliography{reference}

\end{document}